\documentclass[11pt,a4paper,reqno]{amsart}

\usepackage{a4wide}
\usepackage{amssymb,amsmath,amsthm,mathrsfs}
\usepackage{graphicx}
\usepackage{float}
\usepackage{colortbl}
\usepackage{enumerate}
\usepackage{upref}
\usepackage{stackrel}
\usepackage[bookmarks=false]{hyperref}
\usepackage[T2A,T1]{fontenc}
\DeclareSymbolFont{cyrillic}{T2A}{cmr}{m}{n}
\DeclareMathSymbol{\D}{\mathalpha}{cyrillic}{196}

\parindent=0mm
\parskip=2mm

\theoremstyle{plain}
\newtheorem{theorem}{Theorem}[section]
\newtheorem{lemma}[theorem]{Lemma}

\newtheorem{propositionP}[theorem]{Proposition}
\newtheorem{proposition}[theorem]{Proposition}
\newtheorem{corollary}[theorem]{Corollary}


\theoremstyle{definition}

\newtheorem*{condition}{Condition}

\theoremstyle{remark}
\newtheorem{remark}[theorem]{Remark}

\usepackage{enumitem}
\makeatletter
\def\namedlabel#1#2{\begingroup
   #2%
 \def\@currentlabel{#2}%
   \phantomsection\label{#1}\endgroup
}

\def\XI{\mathbf{\Xi}}

\def\o{\omega}
\def\R{\ensuremath{\mathbb R}}
\def\N{\ensuremath{\mathbb N}}

\def\I{\ensuremath{{\bf 1}}}
\def\e{{\ensuremath{\rm e}}}

\def\esup{\ensuremath{\rm essup}}

\def\L{\ensuremath{\mathcal L}}

\def\p{\ensuremath{\mathbb P}}
\def\F{\ensuremath{\mathcal F}}

\def\o{\ensuremath{\omega}}

\def\A{\ensuremath{A^{(q)}}}

\def\Y{\mathcal{Y}}

\def\1{{\bf 1}}

\def\ie{{\em i.e.}, }

\def\dist{\ensuremath{\text{dist}}}

\def\TF{\mathcal{T}}

\def\eps{\varepsilon}

\numberwithin{equation}{section}

\setcounter{secnumdepth}{5}

\begin{document}

\title[EVL for non stationary processes generated by sequential dynamics]{Extreme Value Laws for non stationary processes generated by sequential and random dynamical systems}

\author[A. C. M. Freitas]{Ana Cristina Moreira Freitas}
\address{Ana Cristina Moreira Freitas\\ Centro de Matem\'{a}tica \&
Faculdade de Economia da Universidade do Porto\\ Rua Dr. Roberto Frias \\
4200-464 Porto\\ Portugal} \email{\href{mailto:amoreira@fep.up.pt}{amoreira@fep.up.pt}}
\urladdr{\url{http://www.fep.up.pt/docentes/amoreira/}}

\author[J. M. Freitas]{Jorge Milhazes Freitas}
\address{Jorge Milhazes Freitas\\ Centro de Matem\'{a}tica \& Faculdade de Ci\^encias da Universidade do Porto\\ Rua do
Campo Alegre 687\\ 4169-007 Porto\\ Portugal}
\email{\href{mailto:jmfreita@fc.up.pt}{jmfreita@fc.up.pt}}
\urladdr{\url{http://www.fc.up.pt/pessoas/jmfreita/}}

\author[S. Vaienti]{Sandro Vaienti}
\address{Sandro Vaienti\\ Aix Marseille Universit\'e, CNRS, CPT, UMR 7332
\\ 13288 Marseille, France and
Universit\'e de Toulon, CNRS, CPT, UMR 7332, 83957 La Garde, France.}
\email{vaienti@cpt.univ-mrs.fr}
\urladdr{\url{http://www.cpt.univ-mrs.fr/~vaienti/}}
%
%


\date{\today}

\keywords{Non-stationarity, Extreme Value Theory, Hitting Times, Sequential Dynamical Systems, Random Dynamical Systems} \subjclass[2010]{37A50, 60G70, 37B20, 37A25}


\begin{abstract}
We develop and generalize the theory of extreme value for non-stationary stochastic processes, mostly  by weakening the uniform mixing condition that was previously  used in this setting. We apply our results to  non-autonomous dynamical systems, in particular to {\em sequential dynamical systems}, given by uniformly expanding maps,   and to a few classes of {\em random dynamical systems}. Some examples are presented and  worked out in detail.

\vspace{0.5cm}

\noindent \textsc{R\'esum\'e.} \hspace{0.02cm}
Nous d\'eveloppons et g\'en\'eralisons la th\'eorie des valeurs extr\^emes pour des processus stochastiques non-stationnaires, en  affaiblissant   la condition de m\'elange uniforme qui avait \'et\'e utilis\'ee auparavant.  Nous appliquons nos r\'esultats \`a des syst\`emes dynamiques non autonomes, en particulier aux syst\`emes dynamiques s\'equentiels engendr\'es par des applications dilatantes et \`a une large classe de syst\`emes dynamiques al\'eatoires. Quelques exemples sont pr\'esent\'es et calcul\'es  en d\'etail.

\end{abstract}



\maketitle
\tableofcontents

\section{Introduction}

\subsection{The motivation and the dynamical setting}
One of the most successful directions of ergodic theory in the last decades was the application of probabilistic tools to characterise the asymptotic evolution of a given dynamical system. There is now a well established domain known as {\em statistical properties of dynamical systems}, which attempts to prove limit theorems under different degrees of mixing. Mixing is the way to restore asymptotic independence and, in this way, mimic independent and identically distributed (i.i.d.)  sequences of random variables. A common distribution for the time series arising from the dynamical systems is acquired from the existence of an invariant measure for such systems. In some sense, the existence of such a measure is what defines a dynamical system. Relaxing this assumption gives rise to non-autonomous dynamical systems for which the study of limit theorems is just at the beginning. In this paper, we will focus on one of those statistical properties, namely on asymptotic extreme value distribution laws. Our first goal will be to improve and generalise the previous results by H\"usler (see below), which held for non-identically distributed random variables but under a uniform mixing condition, to the mixing situations typical in dynamical systems. Then we will apply our theoretical results to
 two important examples of non-stationary processes arising in dynamical systems.

 The first example is given by {\em sequential dynamical systems}; they were introduced by Berend and Bergelson \cite{BB84}, as a non-stationary system in which a  concatenation of maps is applied to a given point in the underlying space, and the probability is taken as  a conformal measure, which is conformal for all maps considered and allows the use of the transfer operator (Perron-Fr\"obenius) as a useful tool to quantify the loss of memory of any prescribed initial observable. The theory of sequential systems was later developed  in the fundamental paper by Conze and Raugi \cite{CR07}, where a few limit theorems, in particular the Central Limit Theorem, were proved for concatenations of  one-dimensional dynamical systems, each possessing a transfer operator with a quasi-compact structure on a suitable Banach space. For the same systems and others, even in higher dimensions, the Almost Sure Invariance Principle was subsequently shown  \cite{HNTV}; we will refer to the large class of systems investigated in \cite{HNTV} as concrete examples to  which the non-stationary extreme value theory presented in this article applies.

  The second example pertains to {\em random transformations}, which are constructed on a skew-system whose base is an invertible and hyperbolic system which codes a map on the second factor (this second factor could be seen as {\em fibers}, which are all copy of the same set). On these fibers live a family of {\em sample measures}, each of them corresponding to different ways to code the orbit of a given point. These sample measures will be taken as the probability measures that describe the statistical properties along the factor and they do not give rise to stationary processes (although they satisfy an interesting property when they move from one fiber to the other). Averaging along a sample measure means to fix the particular initial fiber which supports it; the dynamics will transport this measure  from one fiber to the other, and this non-stationary process could  be assimilated to a quenched process, where the map changes step by step according to a given realization.   We defer to the books by L. Arnold \cite{A98} and Y. Kifer \cite{K86,K88} for a detailed account of these transformations, in particular for their ergodic properties. Limit theorems, in particular the CLT, were investigated in \cite{K98}.
There are a few attempts to investigate recurrence in the framework of random transformations: see for instance
\cite{AFV15, RSV14, R14, RT15, KR14}.

\subsection{Extreme Value Laws for general non-stationary processes}
\label{sec:EVL-nonstationary}

As mentioned in \cite{FHR11}, the class of non-stationary stochastic processes is rather large and an Extreme Value Theory for such a general class does not exist. In \cite{H83,H86}, H\"usler developed the first approach to the subject. Under convenient conditions, one can recover the usual extremal behaviour seen for i.i.d or stationary sequences under Leadbetter's conditions. Of course the degree of freedom involved is so large that it is not difficult to give examples with pathological behaviour (see \cite[Section~3]{H86} or  \cite[Example~9.4.4]{FHR11}). However, for appropriate  subclasses, such as for stochastic processes of the form $X_i=a_i+b_iY_i$, with trend values $a_i$, scaling values $b_i$ and a stationary (or i.i.d) stochastic process $Y_0, Y_1, \ldots$, one can study them and obtain the expected behaviour (see \cite{N97}).

The existing theory of extreme values for non-stationary sequences (which is still mostly based on H\"usler's results, see \cite{FHR11}) is not applicable in a dynamical setting because it is built over a uniform mixing condition obtained by adjusting to the non-stationary setting, Leadbetter's $D(u_n)$ condition for stationary processes. As was seen in the stationary setting in \cite{C01,FF08}, this type of condition is not appropriate for stochastic processes arising from dynamical systems since it does not follow from usual properties regarding the loss of memory of chaotic systems, which are usually formulated in terms of decay of correlations. See discussion in Section~2 of \cite{FFT15} and Remarks~2.1 and 3.5 of the same paper.

Hence, the first goal of this paper is to develop a more general theory of extreme values for non-stationary stochastic processes, which enables the study of the extremal behaviour of the non-stationary systems discussed in the preceding Section. The major highlights of this generalisation are: the use of a much weaker mixing condition, motivated by an idea of Collet (in \cite{C01}) and further developed in \cite{FF08,FFT12,FFT15}, that we will adapt to the non-stationary setting and denote by a cyrilic D, \ie $\D$, as in \cite{FFT15};  and a much more sophisticated way of dealing with clustering and the appearance of an Extremal Index less than 1, which is based on an idea introduced in \cite{FFT12} and further developed in \cite{FFT15}, which basically says that when dealing with clustering due to the presence of a periodic phenomenon we can replace the role of the occurrence of exceedances (which in the dynamical setting correspond to hits to target ball sets) by that of the occurrence of escapes (which in the dynamical setting can be associated with hits to annuli target sets).

While in \cite{H83,H86}, H\"usler built on the existing theory of extreme values for stationary sequences developed by Leadbetter and others, here we will follow H\"usler's approach but adapt to the non-stationary setting the more refined \cite{FFT15}.

\section{A general result for extreme value laws for non-stationary processes}

In this section will try to keep as much as possible the notations used in \cite{H83,H86,FFT15}.

Let $X_0, X_1, \ldots$ be a stochastic process\index{random/stochastic process}, where each r.v. $X_i:\Y\to\R$ is defined on the measure space $(\Y,\mathcal B,\p)$.\\

We assume that $\Y$  is a sequence space with a natural product structure so that each possible realisation of the stochastic process\index{random/stochastic process} corresponds to a unique element of $\Y$ and there exists a measurable map $\TF:\Y\to\Y$, the time evolution map, which can be seen as the passage of one unit of time, so that
\[
X_{i-1}\circ \TF =X_{i}, \quad \mbox{for all $i\in\N$}.
\]
The $\sigma$-algebra $\mathcal B$ can also be seen as a product $\sigma$-algebra adapted to the $X_i$'s.
For the purpose of this paper, $X_0, X_1,\ldots$ is possibly non-stationary. Stationarity would mean that $\p$ is $\TF$-invariant. Note that $X_i=X_0\circ \TF_i$, for all $i\in\N_0$, where $\TF_i$ denotes the $i$-fold composition of $\TF$, with the convention that $\TF_0$ denotes the identity map on $\Y$. In the applications  below to sequential dynamical systems, we will have that $\TF_i=T_i\circ \ldots\circ T_1$ will be the concatenation of $i$ possibly different transformations $T_1, \ldots, T_i$.

Each random variable $X_i$ has a marginal distribution function (d.f.) denoted by $F_i$, \ie $F_i(x)=\p(X_i\leq x)$. Note that the $F_i$, with $i\in\N_0$, may all be distinct from each other. For a d.f. $F$ we let $\bar F=1-F$. We define $u_{F_i}=\sup\{x:F_i(x)<1\}$ and let $F_i(u_{F_i}-):=\lim_{h\to 0, h>0} F_i(u_{F_i}-h) =1$ for all $i$.

Our main goal is to determine the limiting law of
$$\mathbf{P}_n=\p(X_0\leq u_{n,0},X_1\leq u_{n,1},\ldots,X_{n-1}\leq u_{n,n-1})$$
as $n\to\infty$, where $\{u_{n,i},i\leq n-1,n\geq 1\}$ is considered a real-valued boundary.
We assume throughout the paper that
\begin{equation}
\bar F_{n,\max}:=\max\{\bar F_i(u_{n,i}), i\leq n-1\}\to0 \mbox{ as }n\to\infty,
\label{Fmax}
\end{equation}
which is equivalent to
$$u_{n,i}\to u_{F_i}\mbox{ as }n\to\infty,\mbox{ uniformly in } i.$$

Let us denote $F^*_n:=\sum_{i=0}^{n-1}\bar F_i(u_{n,i}),$
and assume that  there is $\tau>0$ such that
\begin{equation}
F^*_n:=\sum_{i=0}^{n-1}\bar F_i(u_{n,i})\to\tau, \qquad \mbox{as $n\to\infty.$}
\label{F_n}
\end{equation}

To simplify the notation let $u_i:=u_{n,i}$.

In what follows, for every $A\in\mathcal B$, we denote the complement of $A$ as $A^c:=\mathcal Y\setminus A$.

Let $\mathbb A:=(A_0,A_1,\ldots)$ be a sequence of events such that $A_i\in \TF_i^{-1} \mathcal B$. For some $s,\ell\in \N_0$, we define
\begin{equation}
\label{eq:W-def}
\mathscr W_{s,\ell}(\mathbb A)=\bigcap_{i=s}^{s+\ell-1} A_i^c.
\end{equation}

We will write $\mathscr W_{s,\ell}^c(\mathbb A):=(\mathscr W_{s,\ell}(\mathbb A))^c$.

For some $j\in \N_0$,  we consider
$$\mathbb A_n^{(j)}:=(A_{n,0}^{(j)},A_{n,1}^{(j)},\ldots),$$ where the event $A_{n,i}^{(j)}$ is defined for $j\in\N$ as
$$A_{n,i}^{(j)}:=\{X_i>u_{n,i},X_{i+1}\leq u_{n,i+1},\ldots,X_{i+j}\leq u_{n,i+j}\}$$
and, for $j=0$, we simply define $A_{n,i}^{(0)}(u_{n,i}):=\{X_i>u_{n,i}\}$.

For each $i\in\N_0$ and $n\in\N$, let $R_{n,i}^{(j)}=\min\{r\in\N: A_{n,i}^{(j)}\cap A_{n,i+r}^{(j)}\neq\emptyset\}$.
We assume that there exists $q\in\N_0$ such that:
\begin{equation}
\label{def:q}
q=\min\left\{j\in\N_0:\lim_{n\to\infty} \min_{i\leq n}\left\{R_{n,i}^{(j)}\right\}=\infty\right\}.
\end{equation}

When $q=0$ then $A_{n,i}^{(0)}(u_{n,i})$ corresponds to an exceedance of the threshold $u_{n,i}$ and we expect no clustering of exceedances.

When $q>0$, heuristically one can think that there exists an underlying periodic phenomenon creating short recurrence, \ie clustering of exceedances,  when exceedances occur separated by no more than $q-1$ units of time then they belong to the same cluster. Hence, the sets $A_{n,i}^{(q)}(u_{n,i})$ correspond to the occurrence of exceedances that escape the periodic phenomenon and are not followed by another exceedance in the same cluster. We will refer to the occurrence of $A_{n,i}^{(q)}(u_{n,i})$ as the occurrence of an escape at time $i$, whenever $q>0$.

The following result adapts to the non-stationary setting an idea introduced in \cite{FFT12} and further developed in \cite[Proposition~2.7]{FFT15}, which essentially says the asymptotic distribution of $\mathbf P_n$ coincides with that of $\mathscr W_{0,n}(\mathbb A_n^{(q)})$, which motivates the special role played by $\mathbb A_n^{(q)}$ and the conditions we propose next.

\begin{proposition}
\label{prop:relation-balls-annuli-general}
Given events $B_0,B_1,\ldots\in\mathcal B$, let $r,q,n\in\N$ be such that $q<n$ and define $\mathbb B=(B_0,B_1,\ldots)$, $A_r=B_r\setminus \bigcup_{j=1}^{q} B_{r+j}$ and $\mathbb A=(A_0,A_1,\ldots)$. Then
$$
\left|\p(\mathscr W_{0,n}(\mathbb B))-\p(\mathscr W_{0,n}(\mathbb A))\right|\leq \sum_{j=1}^{q} \p\left(\mathscr W_{0,n}(\mathbb A)\cap (B_{n-j}\setminus A_{n-j})\right).
$$
\end{proposition}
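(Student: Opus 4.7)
The plan is straightforward: reduce the symmetric difference $\mathscr W_{0,n}(\mathbb A)\,\triangle\,\mathscr W_{0,n}(\mathbb B)$ to a one-sided difference, then localize the culprit indices to the last $q$ positions via a maximality argument.

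First I would observe that, by construction, $A_r\subseteq B_r$ for every $r$, hence $B_r^c\subseteq A_r^c$, and therefore $\mathscr W_{0,n}(\mathbb B)=\bigcap_{i=0}^{n-1}B_i^c\subseteq\bigcap_{i=0}^{n-1}A_i^c=\mathscr W_{0,n}(\mathbb A)$. Consequently the absolute value can be dropped and
\[
\bigl|\p(\mathscr W_{0,n}(\mathbb B))-\p(\mathscr W_{0,n}(\mathbb A))\bigr|=\p\bigl(\mathscr W_{0,n}(\mathbb A)\setminus\mathscr W_{0,n}(\mathbb B)\bigr).
\]

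Next, take any $\omega\in\mathscr W_{0,n}(\mathbb A)\setminus\mathscr W_{0,n}(\mathbb B)$. By definition $A_i^c$ holds for every $i\in\{0,\ldots,n-1\}$, while at least one $B_i$ holds in the same range; let $i^\ast$ be the \emph{largest} such index. The key claim is that necessarily $i^\ast\geq n-q$. Suppose for contradiction that $i^\ast\leq n-1-q$. Then the indices $i^\ast+1,\ldots,i^\ast+q$ all lie in $\{0,\ldots,n-1\}$, and by maximality of $i^\ast$ none of the events $B_{i^\ast+1},\ldots,B_{i^\ast+q}$ occurs at $\omega$. But then, since $B_{i^\ast}$ does occur,
\[
\omega\in B_{i^\ast}\setminus\bigcup_{j=1}^{q}B_{i^\ast+j}=A_{i^\ast},
\]
contradicting $\omega\in A_{i^\ast}^c$. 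Hence $i^\ast=n-j$ for some $j\in\{1,\ldots,q\}$, and for this $j$ we have $\omega\in B_{n-j}$ together with $\omega\in A_{n-j}^c$, i.e. $\omega\in B_{n-j}\setminus A_{n-j}$ (using $A_{n-j}\subseteq B_{n-j}$).

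This yields the set-theoretic inclusion
\[
\mathscr W_{0,n}(\mathbb A)\setminus\mathscr W_{0,n}(\mathbb B)\subseteq\bigcup_{j=1}^{q}\Bigl(\mathscr W_{0,n}(\mathbb A)\cap(B_{n-j}\setminus A_{n-j})\Bigr),
\]
after which monotonicity and subadditivity of $\p$ give the desired bound. The only non-routine step is the maximality argument in the middle paragraph; once one notices that the definition of $A_r$ as ``$B_r$ minus the next $q$ $B$'s'' is tailor-made so that a maximal $B$-occurrence away from the right boundary is automatically an $A$-occurrence, the rest is bookkeeping.
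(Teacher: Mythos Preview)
Your proof is correct and follows essentially the same approach as the paper's own proof: both reduce to the one-sided difference via $A_r\subseteq B_r$, then use the maximality argument on the largest index $i^\ast$ at which $B_{i^\ast}$ occurs to force $i^\ast\in\{n-q,\ldots,n-1\}$, and finish with subadditivity. The exposition is slightly more streamlined than the paper's, but the logic is identical.
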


Now, we introduce a mixing condition which is specially designed for the application to the dynamical setting, contrary to the existing ones in the literature.

\begin{condition}[$\D_q(u_{n,i})$]\label{cond:D} We say that $\D_q(u_n)$ holds for the sequence $X_0,X_1,\ldots$ if for every  $\ell,t,n\in\N$,
\begin{equation}\label{eq:D1}
\left|\p\left(\A_{n,i}\cap
 \mathscr W_{i+t,\ell}\left(\mathbb A_n^{(q)}\right) \right)-\p\left(\A_{n,i}\right)
  \p\left(\mathscr W_{i+t,\ell}\left(\mathbb A_n^{(q)}\right)\right)\right|\leq \gamma_i(q,n,t),
\end{equation}
where $\gamma_i(q,n,t)$ is decreasing in $t$ for each $n$ and each $i$ and there exists a sequence $(t_n^*)_{n\in\N}$ such that $t_n^* \bar F_{n,\max}\to0$ and
$\sum_{i=0}^{n-1}\gamma_i(q,n,t_n^*)\to0$ when $n\rightarrow\infty$.
\end{condition}

\begin{remark}Condition $\D_q(u_{n,i})$ is a sort of mixing condition resembling H\"usler's adjustment of Leadbetter's condition $D(u_n)$ but with the great advantage that it can be checked for non-stationary dynamical systems, as we will see in Sections~\ref{subsec:D-check} 
and \ref{subsec:random-subshift},  contrary to H\"usler's $D(u_{n,i})$. This advantage resides on the fact that the event $A_{n,i}^{(q)}(u_{n,i})$ depends only on a finite number of random variables,  making $\D_q(u_{n,i})$ a much weaker requirement in terms of uniformity when compared to H\"usler's $D(u_{n,i})$. Recall that H\"usler's $D(u_{n,i})$ required an uniform bound for all possible $i$ and all possible numbers of random variables of the process on which the first event depended.
\end{remark}

In order to prove the existence of a distributional limit for $\mathbf P_n$ we use as usual a blocking argument that splits the data into $k_n$ blocks separated by time gaps of size larger than $t_n^*$, which are created by simply disregarding the observations in the time frame occupied by the gaps. The precise construction of the blocks is given in Section~\ref{subsec:blocks} but we briefly describe below some of the properties of this construction.

In the stationary context, one takes blocks of equal size, which in particular means that the expected number of exceedances within each block is $n\p(X_0>u_n)/k_n\sim \tau/k_n$. Here the blocks may have different sizes, which we will denote by $\ell_{n,1}, \ldots, \ell_{n, k_n}$ but, as in \cite{H83,H86}, these are chosen so that the expected number of exceedances is again $\sim\tau/k_n$.  Also, for $i=1,\ldots,k_n$, let $\L_{n,i}=\sum_{j=1}^{i}\ell_{n,j}$ and $\L_{n,0}=0$.

The time gaps are created by disregarding the last observations in each block so that the true blocks become the remaining part. To do that, we have to balance the facts that we want the gaps to be big enough so that they are larger than $t_n^*$ but on the other hand we also want  the gaps to be sufficiently  small so that the information disregarded does not compromise the computations. This is achieved by choosing the number of blocks, which correspond to the sequence $(k_n)_{n\in\N}$ diverging but slowly enough so that the weight of the gaps is negligible when  compared to that of the true blocks.

As usual in extreme value theory, in order to guarantee the existence of a distributional limit one needs to impose some restrictions on the speed of recurrence.

For $q\in\N_0$ given by \eqref{def:q}, consider the sequence $(t_n^*)_{n\in\N}$, given by condition  $\D_q(u_n)$ and let $(k_n)_{n\in\N}$ be another sequence of integers such that
\begin{equation}
\label{eq:kn-sequence}
k_n\to\infty\quad \mbox{and}\quad  k_n t_n^* \bar F_{n,\max}\to0
\end{equation}
as $n\rightarrow\infty$.

\begin{condition}[$\D'_q(u_{n,i})$]\label{cond:D'q} We say that $\D'_q(u_{n,i})$
holds for the sequence $X_0,X_1,X_2,\ldots$ if there exists a sequence $(k_n)_{n\in\N}$ satisfying \eqref{eq:kn-sequence} and such that
\begin{equation}
\label{eq:D'rho-un}
\lim_{n\rightarrow\infty}\sum_{i=1}^{k_n} \sum_{j=0}^{\ell_i-1} \sum_{r>j}^{\ell_i-1}\p(\A_{\L_{i-1}+j}\cap \A_{\L_{i-1}+r})=0.
\end{equation}
\end{condition}

Condition $\D'_q(u_{n,i})$ precludes the occurrence of clustering of escapes (or exceedances, when $q=0$).

\begin{remark}
Note that condition $\D'_p(u_{n,i})$ is an adjustment of a similar condition $\D'_p(u_{n})$ in \cite{FFT15} in the stationary setting, which is similar to (although slightly weaker than) condition $D^{(p+1)}(u_n)$ in the formulation of \cite[Equation (1.2)]{CHM91}
\end{remark}

When $q=0$, observe that $\D'_q(u_{n,i})$ is very similar to $D'(u_{n,i})$ from H\"usler, which prevents clustering of exceedances, just as $D'(u_n)$ introduced by Leadbetter did in the stationary setting.

When $q>0$, we have clustering of exceedances, \ie the exceedances have a tendency to appear aggregated in groups (called clusters). One of the main ideas in \cite{FFT12} that we use here is that the events $\A_{n,i}$ play a key role in determining the limiting EVL and in identifying the clusters. In fact, when $\D'_q(u_{n,i})$ holds we have that every cluster ends with an entrance in $\A_{n,i}$, meaning that the inter cluster exceedances must appear separated at most by $q$ units of time.

In this approach, it is rather important to observe the prominent role played by condition $\D'_q(u_{n,i})$. In particular, note that if condition $\D'_q(u_{n,i})$ holds for some particular $q=q_0\in\N_0$, then condition $\D'_q(u_{n,i})$ holds for all $q\geq q_0$. Then,  $q$  as defined in \eqref{def:q} is indeed the natural candidate to try to show the validity of $\D'_q(u_n)$.

We now give a way of defining the Extremal Index (EI) using the sets $\A_{n,i}$.
For $q\in\N_0$ given by \eqref{def:q}, we also assume that there exists $0\leq\theta\leq1$, which will be referred to as the EI, such that
\begin{equation}
\label{eq:EI}
\lim_{n\to\infty}\max_{i=1,\ldots,k_n}\left\{\left|\theta k_n\sum_{j=\L_{n,i-1}}^{\L_{n,i}-1}\bar F(u_{n,j})-k_n\sum_{j=\L_{n,i-1}}^{\L_{n,i}-1}\p\left(A_{n,j}^{(q)}\right)\right|\right\}=0.
\end{equation}

The following is the main theorem of this section.

\begin{theorem}
\label{thm:error-terms-general-SP-no-clustering}
Let $X_0, X_1, \ldots$ be a stationary stochastic process and suppose \eqref{Fmax} and \eqref{F_n} hold for some $\tau>0$. Let $q\in\N_0$ be as in \eqref{def:q} and assume that \eqref{eq:EI} holds. Assume also that conditions $\D(u_{n,i})$ and $\D'_q(u_{n,i})$ are satisfied. Then
\begin{align*}
\lim_{n\to \infty}\mathbf P_n=e^{-\theta \tau}.
\end{align*}
\end{theorem}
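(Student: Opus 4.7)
The plan is to combine the annuli--balls reduction of Proposition~\ref{prop:relation-balls-annuli-general} with a blocking argument in the spirit of Leadbetter and H\"usler, using the block decomposition constructed in Section~\ref{subsec:blocks}. Setting $B_i:=\{X_i>u_{n,i}\}=A_{n,i}^{(0)}$, so that $\mathbf P_n=\p(\mathscr W_{0,n}(\mathbb B))$, and observing that $A_{n,i}^{(q)}=B_i\setminus\bigcup_{j=1}^{q}B_{i+j}$, Proposition~\ref{prop:relation-balls-annuli-general} together with the crude bound $\p(B_{n-j})\leq\bar F_{n,\max}$ gives
\begin{equation*}
\bigl|\mathbf P_n-\p(\mathscr W_{0,n}(\mathbb A_n^{(q)}))\bigr|\leq q\,\bar F_{n,\max}\longrightarrow 0,
\end{equation*}
so it suffices to compute $\lim_n\p(\mathscr W_{0,n}(\mathbb A_n^{(q)}))$.

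Next, I would split $\{0,\ldots,n-1\}$ into the $k_n$ blocks of Section~\ref{subsec:blocks} and separate consecutive blocks by time gaps of length $t_n^*$ obtained by discarding the last $t_n^*$ indices of each block. Setting $W_i:=\mathscr W_{\L_{n,i-1},\,\ell_{n,i}-t_n^*}(\mathbb A_n^{(q)})$, the monotonicity of $\mathscr W$ in the window length and a union bound show that restoring the gaps costs at most $k_n t_n^*\bar F_{n,\max}\to 0$ by~\eqref{eq:kn-sequence}, so it is enough to analyse $\p(\bigcap_{i=1}^{k_n}W_i)$. Writing $W_i^c=\bigcup_{j=\L_{n,i-1}}^{\L_{n,i}-t_n^*-1}A_{n,j}^{(q)}$ and iterating the mixing bound~\eqref{eq:D1} across the $k_n-1$ gaps (each of length $\geq t_n^*$, so that each individual error is dominated by $\gamma_j(q,n,t_n^*)$), a standard telescoping argument yields
\begin{equation*}
\left|\p\left(\bigcap_{i=1}^{k_n}W_i\right)-\prod_{i=1}^{k_n}\p(W_i)\right|\leq \sum_{i=0}^{n-1}\gamma_i(q,n,t_n^*)\longrightarrow 0
\end{equation*}
by Condition~\ref{cond:D}.

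For each block, Bonferroni's inequality gives
\begin{equation*}
\left|\p(W_i)-\Bigl(1-\sum_{j=\L_{n,i-1}}^{\L_{n,i}-t_n^*-1}\p(A_{n,j}^{(q)})\Bigr)\right|\leq\sum_{\L_{n,i-1}\leq j<r\leq \L_{n,i}-t_n^*-1}\p(A_{n,j}^{(q)}\cap A_{n,r}^{(q)}),
\end{equation*}
and summing over $i$ the double-sum error is majorised by the quantity in~\eqref{eq:D'rho-un}, which vanishes by Condition~\ref{cond:D'q}. By the EI hypothesis~\eqref{eq:EI}, the per-block single sum equals $\theta\sigma_{n,i}+o(k_n^{-1})$, where $\sigma_{n,i}:=\sum_{j=\L_{n,i-1}}^{\L_{n,i}-1}\bar F_j(u_{n,j})$, the contribution of the $t_n^*$ discarded indices being uniformly $o(k_n^{-1})$ by~\eqref{eq:kn-sequence}. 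Since $\sum_i\sigma_{n,i}=F_n^*\to\tau$ and the block construction forces $\max_i\sigma_{n,i}=O(\tau/k_n)\to 0$, the elementary estimate $\log(1-x)=-x+O(x^2)$ applied termwise gives
\begin{equation*}
\log\prod_{i=1}^{k_n}\bigl(1-\theta\sigma_{n,i}+o(k_n^{-1})\bigr)=-\theta\sum_i\sigma_{n,i}+O\Bigl(\sum_i\sigma_{n,i}^2\Bigr)+o(1)\longrightarrow -\theta\tau,
\end{equation*}
whence $\prod_i\p(W_i)\to e^{-\theta\tau}$, completing the proof.

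The main technical obstacle is the careful bookkeeping of the three simultaneous error sources---the annuli reduction, the mixing factorisation, and the Bonferroni replacement---and in particular verifying that the iteration of~\eqref{eq:D1} at the block level (rather than at the level of single events $A_{n,i}^{(q)}$) still produces a clean total error $\sum_i\gamma_i(q,n,t_n^*)$, and that removing the time gaps does not corrupt the per-block sums $\sigma_{n,i}$ nor the application of~\eqref{eq:EI}; this is where the flexibility of the quantitative assumption $k_n t_n^*\bar F_{n,\max}\to 0$ in~\eqref{eq:kn-sequence} is crucial.
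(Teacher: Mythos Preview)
Your overall architecture---annuli reduction, blocking, mixing, Bonferroni---matches the paper's. The real difference, and the place where your sketch has a genuine gap, is the order in which you apply mixing and Bonferroni.

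You propose to \emph{first} factorise $\p\bigl(\bigcap_i W_i\bigr)\approx\prod_i\p(W_i)$ via a telescoping use of $\D_q$, and \emph{then} replace each $\p(W_i)$ by $1-\sum_j\p(A_{n,j}^{(q)})$ via Bonferroni. The problem is that in the telescoping step you must compare a single event $A_{n,j}^{(q)}$ (after decomposing $W_i^c$) against the \emph{non-contiguous} future event $\bigcap_{r>i}W_r$, whereas Condition~$\D_q(u_{n,i})$ in~\eqref{eq:D1} is stated only for contiguous windows $\mathscr W_{i+t,\ell}(\mathbb A_n^{(q)})$. So the ``standard telescoping argument'' you invoke does not go through with $\D_q$ as written; it would require the stronger H\"usler-type $D(u_{n,i})$ condition that the paper is specifically designed to avoid. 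You correctly flag this as the main obstacle, but you do not say how to overcome it.

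The paper's solution is to interleave the two steps instead of separating them. It never forms the product $\prod_i\p(W_i)$. Rather, starting from the \emph{full} contiguous window $\mathscr W_{0,\L_{k_n}}$, it peels off one block at a time: Lemma~\ref{lem:time-gap-1} opens a single gap of size $t_i$ at the right end of the current leading block, and then Lemma~\ref{lem:inductive-step-1} simultaneously performs the Bonferroni replacement of that block by $1-\sum_j\p(A_j)$ \emph{and} produces the mixing error in the exact form $\sum_j\bigl|\p(A_j)\p(\mathscr W_{\L_i,\bar\L_{i+1}})-\p(A_j\cap\mathscr W_{\L_i,\bar\L_{i+1}})\bigr|$. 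The key point is that the ``future'' event $\mathscr W_{\L_i,\bar\L_{i+1}}$ appearing here is still a single contiguous window (no gaps have yet been created in the remaining blocks), so $\D_q$ applies verbatim. Iterating this $k_n$ times yields Proposition~\ref{prop:main-estimate-1} directly, with the total mixing error bounded by $\sum_{i=0}^{n-1}\gamma_i(q,n,t_n^*)$ as you want. Once you reorganise your argument along these lines, the remaining computations (the gap cost, the $\D'_q$ term, the product limit via~\eqref{eq:EI}) are essentially as you wrote them.
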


The rest of this section is devoted to the proof of Theorem~\ref{thm:error-terms-general-SP-no-clustering}.

To simplify notation, we will drop the index $n\in\N$ and write: $u_i:=u_{n,i}$, $A_{i}^{(q)}:=A_{n,i}^{(q)}$, $\mathbb A^{(q)}:=\mathbb A_n^{(q)}$, $\ell_{i}:=\ell_{n,i}$, $\L_{i}:=\L_{n,i}$.

\subsection{Preliminaries to the argument}

We begin by proving the crucial observation stated in Proposition~\ref{prop:relation-balls-annuli-general}.
\begin{proof}[Proof of Proposition~\ref{prop:relation-balls-annuli-general}]
Since $A_r\subset B_r$, then clearly $\mathscr W_{0,n}(\mathbb B)\subset \mathscr W_{0,n} (\mathbb A)$. Hence, we have to estimate the probability of $\mathscr W_{0,n} (\mathbb A)\setminus \mathscr W_{0,n}(\mathbb B)$.

Let $x\in\mathscr W_{0,n} (\mathbb A)\setminus \mathscr W_{0,n}(\mathbb B)$. We will see that there exists $j\in\{1,\ldots, q\}$ such that $x\in B_{n-j}$. In fact, suppose that no such $j$ exists. Then let $\ell=\max\{i\in\{1,\ldots, n-1\}:\, x\in B_i\}$. Then, clearly, $\ell<n-q$. Hence, if $x\notin B_j$, for all $i=\ell+1,\ldots,n-1$, then we must have that $x\in A_\ell$ by definition of $A$. But this contradicts the fact that $x\in\mathscr W_{0,n} (\mathbb A)$. Consequently, we have that there exists $j\in\{1,\ldots, q\}$ such that $x\in B_{n-j}$ and since $x\in\mathscr W_{0,n} (\mathbb A)$ then we can actually write $x\in B_{n-j}\setminus A_{n-j}$.

This means that $\mathscr W_{0,n} (\mathbb A)\setminus \mathscr W_{0,n}(\mathbb B)\subset \bigcup_{j=1}^q (B_{n-j}\setminus A_{n-j})\cap \mathscr W_{0,n} (\mathbb A)$ and then
\begin{multline*}
\big|\p(\mathscr W_{0,n}(\mathbb B))-\p(\mathscr W_{0,n}(\mathbb A))\big|=\p(\mathscr W_{0,n} (\mathbb A)\setminus \mathscr W_{0,n}(\mathbb B))\\ \leq
\p\left(\bigcup_{j=1}^q (B_{n-j}\setminus A_{n-j})\cap \mathscr W_{0,n} (\mathbb A)\right)\leq\sum_{j=1}^{q} \p\left(\mathscr W_{0,n}(\mathbb A)\cap (B_{n-j}\setminus A_{n-j})\right),
\end{multline*}
as required.
\end{proof}

We prove next some lemmata that pave the way for Proposition~\ref{prop:main-estimate-1}, which is the cornerstone of the argument leading to the proof of Theorem~\ref{thm:error-terms-general-SP-no-clustering}

\begin{lemma}
\label{lem:time-gap-1}
For any fixed  $\mathbb A=(A_0,A_1,\ldots)$, $A_i\in \mathcal B$ for $i=0,1,\ldots$, and integers $a,s,t,m$, with  $a<s$, we have:
\begin{equation*}
\left|\p(\mathscr W_{a,s+t+m}(\mathbb A))-\p(\mathscr W_{a,s}(\mathbb A)\cap \mathscr W_{a+s+t,m}(\mathbb A))\right|\leq \sum_{j=s}^{s+t-1}\p(A_{a+j}).
\end{equation*}
 \end{lemma}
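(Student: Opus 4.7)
The proof will be a direct set-theoretic calculation followed by a union bound, so the main work is just unpacking the definition of $\mathscr W_{s,\ell}(\mathbb A)$ carefully.

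First I would rewrite both events in a form that makes the comparison transparent. By definition,
\[
\mathscr W_{a,s+t+m}(\mathbb A)=\bigcap_{i=a}^{a+s+t+m-1}A_i^c=\mathscr W_{a,s}(\mathbb A)\cap \mathscr W_{a+s,t}(\mathbb A)\cap \mathscr W_{a+s+t,m}(\mathbb A),
\]
whereas the second event is $\mathscr W_{a,s}(\mathbb A)\cap \mathscr W_{a+s+t,m}(\mathbb A)$, which differs precisely in that the block $\mathscr W_{a+s,t}(\mathbb A)=\bigcap_{j=a+s}^{a+s+t-1}A_j^c$ is missing. In particular, inclusion is immediate:
\[
\mathscr W_{a,s+t+m}(\mathbb A)\subset \mathscr W_{a,s}(\mathbb A)\cap \mathscr W_{a+s+t,m}(\mathbb A).
\]

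Next I would use this inclusion to replace the absolute value of the difference of probabilities by the probability of the set difference, and then bound that set. Taking complements,
\[
\bigl(\mathscr W_{a,s}(\mathbb A)\cap \mathscr W_{a+s+t,m}(\mathbb A)\bigr)\setminus \mathscr W_{a,s+t+m}(\mathbb A)\subset \bigl(\mathscr W_{a+s,t}(\mathbb A)\bigr)^c=\bigcup_{j=s}^{s+t-1}A_{a+j}.
\]
Therefore
\[
\bigl|\p(\mathscr W_{a,s+t+m}(\mathbb A))-\p(\mathscr W_{a,s}(\mathbb A)\cap \mathscr W_{a+s+t,m}(\mathbb A))\bigr|\le \p\!\left(\bigcup_{j=s}^{s+t-1}A_{a+j}\right),
\]
and a standard union bound on the right-hand side yields the claimed estimate $\sum_{j=s}^{s+t-1}\p(A_{a+j})$.

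There is no genuine obstacle here: the whole lemma is a bookkeeping statement saying that ``erasing a time window of length $t$'' from a long intersection of $A_i^c$'s costs at most the sum of the $\p(A_j)$'s over that window. The only place to be careful is to match the index ranges correctly when splitting $\mathscr W_{a,s+t+m}(\mathbb A)$ into three consecutive blocks of lengths $s$, $t$, $m$ starting at $a$, $a+s$, $a+s+t$ respectively, and to note that the hypothesis $a<s$ plays no role in the argument beyond ensuring that $a,s,t,m$ are in the intended range for indexing.
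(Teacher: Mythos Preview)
Your proof is correct and follows essentially the same approach as the paper: both identify the set difference as contained in $\mathscr W_{a+s,t}^c(\mathbb A)=\bigcup_{j=s}^{s+t-1}A_{a+j}$ and then apply a union bound. The paper's version is slightly terser, writing the difference of probabilities directly as $\p(\mathscr W_{a,s}(\mathbb A)\cap \mathscr W_{a+s,t}^c(\mathbb A)\cap\mathscr W_{a+s+t,m}(\mathbb A))$, but the argument is identical.
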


  \begin{proof}
\begin{align*}
\p(\mathscr W_{a,s}(\mathbb A)\cap \mathscr W_{a+s+t,m}(\mathbb A))-\p(\mathscr W_{a,s+t+m}(\mathbb A))&=\p(\mathscr W_{a,s}(\mathbb A)\cap \mathscr W_{a+s,t}^c(\mathbb A)\cap\mathscr W_{a+s+t,m}(\mathbb A)) \\
&\leq\p(\mathscr W_{a+s,t}^c(\mathbb A))=\p(\cup_{j=s}^{s+t-1} (A_{a+j}))\\
& \leq \sum_{j=s}^{s+t-1}\p(A_{a+j}).
\end{align*}
\end{proof}

\begin{lemma}
\label{lem:inductive-step-1}For any fixed  $\mathbb A=(A_0,A_1,\ldots)$, $A_i\in \mathcal B$ for $i=0,1,\ldots$, and integers $a,s,t,m$, with  $a<s$, we have:
\begin{multline*}
\left|\p(\mathscr W_{a,s}(\mathbb A)\cap \mathscr W_{a+s+t,m}(\mathbb A))-\p(\mathscr W_{a+s+t,m}(\mathbb A))\left(1-\sum_{j=0}^{s-1}\p(A_{a+j})\right)\right|\leq\\
\leq\left|\sum_{j=0}^{s-1}\p(A_{a+j})\p(\mathscr W_{a+s+t,m}(\mathbb A))-\sum_{j=0}^{s-1}\p(A_{a+j}\cap \mathscr  W_{a+s+t,m}(\mathbb A))\right| +\sum_{j=0}^{s-1}\sum_{i>j}^{s-1}\p(A_{a+i}\cap A_{a+j}).
\end{multline*}
 \end{lemma}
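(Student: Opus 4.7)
The plan is to reduce the claimed bound to the standard Bonferroni inequality for the probability of a union, combined with one application of the triangle inequality.

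Write $W_1 := \mathscr W_{a,s}(\mathbb A)$ and $W_2 := \mathscr W_{a+s+t,m}(\mathbb A)$, and observe that $W_1^c = \bigcup_{j=0}^{s-1} A_{a+j}$. Hence
$$\p(W_1\cap W_2)=\p(W_2)-\p(W_1^c\cap W_2),$$
so the quantity whose absolute value we want to bound equals
$$\sum_{j=0}^{s-1}\p(A_{a+j})\,\p(W_2)\;-\;\p\!\left(\bigcup_{j=0}^{s-1}A_{a+j}\cap W_2\right).$$
This is the key algebraic identity; the rest is estimating the union.

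By Bonferroni's inequality applied to the events $\{A_{a+j}\cap W_2\}_{j=0}^{s-1}$,
$$\sum_{j=0}^{s-1}\p(A_{a+j}\cap W_2)\;-\;\sum_{j=0}^{s-1}\sum_{i>j}^{s-1}\p(A_{a+i}\cap A_{a+j}\cap W_2)\;\leq\;\p\!\left(\bigcup_{j=0}^{s-1}A_{a+j}\cap W_2\right)\;\leq\;\sum_{j=0}^{s-1}\p(A_{a+j}\cap W_2).$$
Dropping the restriction to $W_2$ in the pairwise intersections (which can only increase them), and combining these two-sided estimates, we get
$$\left|\p\!\left(\bigcup_{j=0}^{s-1}A_{a+j}\cap W_2\right)-\sum_{j=0}^{s-1}\p(A_{a+j}\cap W_2)\right|\;\leq\;\sum_{j=0}^{s-1}\sum_{i>j}^{s-1}\p(A_{a+i}\cap A_{a+j}).$$

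Now I would insert and subtract $\sum_{j=0}^{s-1}\p(A_{a+j}\cap W_2)$ inside the quantity identified in the first paragraph, and apply the triangle inequality. The first piece is exactly
$$\left|\sum_{j=0}^{s-1}\p(A_{a+j})\,\p(W_2)\;-\;\sum_{j=0}^{s-1}\p(A_{a+j}\cap W_2)\right|,$$
which is the first term on the right-hand side of the statement, and the second piece is bounded using the Bonferroni estimate above by $\sum_{j=0}^{s-1}\sum_{i>j}^{s-1}\p(A_{a+i}\cap A_{a+j})$, which is the second term. This yields the stated inequality.

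No serious obstacle is expected: the estimate is essentially Bonferroni combined with bookkeeping. The only thing to be careful about is that the first of the two terms on the right-hand side of the statement is \emph{not} bounded trivially (it is left as a "dependence" error term to be estimated later via the mixing condition $\D_q$), so the proof should not attempt to control it here; it should simply be isolated as shown.
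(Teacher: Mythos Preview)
Your proof is correct and follows essentially the same route as the paper's: both use the identity $\p(W_1\cap W_2)=\p(W_2)-\p(W_1^c\cap W_2)$ with $W_1^c=\bigcup_{j=0}^{s-1}A_{a+j}$, then add and subtract $\sum_{j=0}^{s-1}\p(A_{a+j}\cap W_2)$ and control the resulting difference by the second-order Bonferroni inequality, finally dropping $W_2$ in the pairwise intersections. The only cosmetic difference is the order of presentation---the paper applies the triangle inequality first and then rewrites the second term, whereas you first rewrite the quantity and then apply the triangle inequality---but the ingredients and estimates are identical.
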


\begin{proof}
Observe that
\begin{align*}
\label{eq:triangular1}
&\left|\p(\mathscr W_{a,s}(\mathbb A)\cap \mathscr W_{a+s+t,m}(\mathbb A))-\p(\mathscr W_{a+s+t,m}(\mathbb A))(1-\sum_{j=0}^{s-1}\p(A_{a+j}))\right|
\\
&\leq \left|\sum_{j=0}^{s-1}\p(A_{a+j})\p(\mathscr W_{a+s+t,m}(\mathbb A))-\sum_{j=0}^{s-1}\p(A_{a+j}\cap \mathscr  W_{a+s+t,m}(\mathbb A))\right|
 \\
&+\left|\p(\mathscr W_{a,s}(\mathbb A)\cap \mathscr W_{a+s+t,m}(\mathbb A))-\p(\mathscr W_{a+s+t,m}(\mathbb A))+\sum_{j=0}^{s-1}\p(A_{a+j}\cap \mathscr W_{a+s+t,m}(\mathbb A))\right|.
\end{align*}

Regarding the second term on the right, we have
\begin{align*}
\p(\mathscr W_{a,s}(\mathbb A)\cap \mathscr W_{a+s+t,m}(\mathbb A))=\p(\mathscr W_{a+s+t,m}(\mathbb A))-\p(\mathscr W_{a,s}^c(\mathbb A)\cap \mathscr W_{a+s+t,m}(\mathbb A)).
\end{align*}

Now, since $\mathscr W_{a,s}^c(\mathbb A)\cap \mathscr W_{a+s+t,m}(\mathbb A)=\cup_{i=0}^{s-1}(A_{a+i}\cap \mathscr W_{a+s+t,m}(\mathbb A))$, we have
\[
\p(\mathscr W_{a,s}^c(\mathbb A)\cap \mathscr W_{a+s+t,m}(\mathbb A))\leq \sum_{i=0}^{s-1}(A_{a+i}\cap\p( \mathscr W_{a+s+t,m}(\mathbb A)))
\]
and so,
\[
0\leq \sum_{j=0}^{s-1}\p(A_{a+j}\cap \mathscr  W_{s+t,m}(\mathbb A))-\p(\mathscr W_{a,s}^c(\mathbb A)\cap \mathscr W_{a+s+t,m}(\mathbb A))\leq \sum_{j=0}^{s-1}\sum_{i>j}^{s-1}\p(A_{a+i}\cap A_{a+j}\cap  \mathscr W_{a+s+t,m}(\mathbb A))
\]
Hence, using these last computations we get:
\begin{align}
\nonumber&\Big|\p(\mathscr W_{a,s}(\mathbb A)\cap \mathscr W_{a+s+t,m}(\mathbb A))-\p(\mathscr W_{a+s+t,m}(\mathbb A))+\sum_{j=0}^{s-1}\p(A_{a+j}\cap \mathscr W_{a+s+t,m}(\mathbb A))\Big|
\\
\nonumber&=\Big|-\p(\mathscr W_{a,s}^c(\mathbb A)\cap \mathscr W_{a+s+t,m}(\mathbb A))+\sum_{j=0}^{s-1}\p(A_{a+j}\cap \mathscr W_{a+s+t,m}(\mathbb A))\Big|
\\
\nonumber&\leq \sum_{j=0}^{s-1}\sum_{i> j}^{s-1}\p(A_{a+i}\cap A_{a+j}\cap \mathscr W_{a+s+t,m}(\mathbb A))\nonumber
\\
&\leq \sum_{j=0}^{s-1}\sum_{i> j}^{s-1}\p(A_{a+i}\cap A_{a+j}).\nonumber
\end{align}
\end{proof}

\subsection{The construction of the blocks}
\label{subsec:blocks}

The construction of the blocks here, contrary to the stationary case, in which the blocks have equal size, is designed so that the expected number of exceedances in each block is the same. We follow closely the construction in \cite{H83,H86}.

For each $n\in\N$ we split the random variables $X_0, \ldots, X_{n-1}$ into $k_n$ initial blocks, where $k_n$ is given by \eqref{eq:kn-sequence}, of sizes $\ell_1, \ldots, \ell_{k_n}$ defined in the following way. Let as before $\L_i=\sum_{j=1}^i \ell_i$ and $\L_0=\ell_0=0$. Assume that $\ell_1, \ldots, \ell_{i-1}$ are already defined. Take $\ell_i$ to be the largest integer such that:
$$
\sum_{j=\L_{i-1}}^{\L_{i-1}+\ell_i-1}\bar F(u_{n,i})\leq \frac{F_n^*}{k_n}.
$$

The final working blocks are obtained by disregarding the last observations of each initial block, which will create a time gap between each final block. The size of the time gaps must be balanced in order to have at least a size $t_n^*$ but such that its weight on the average number of exceedances is negligible when compared to that of the final blocks. For that purpose we define
$$
\eps(n):=(t_n^*+1)\bar F_{max} \frac{k_n}{F_n^*}.
$$
Note that by \eqref{F_n} and \eqref{eq:kn-sequence}, it follows immediately that $\lim_{n\to\infty}\eps(n)=0$. Now, for each $i=1,\ldots,k_n$ let $t_i$ be the largest integer such that
$$
\sum_{j=\L_{i}-t_i}^{\L_i-1}\bar F(u_{n,i})\leq \eps(n)\frac{F_n^*}{k_n}.
$$

Hence, the final working blocks correspond to the observations within the time frame $\L_{i-1}+1,\ldots,\L_i-t_i$, while the time gaps correspond to the observations in the time frame $\L_i-t_i+1,\ldots, \L_i$, for all $i=1,\ldots,k_n$.

Note that $t_n^*\leq t_i< \ell_i$, for each $i=1,\ldots,k_n$.
 The second inequality is trivial. For the first inequality note that by definition of $t_i$ we have
$$ \eps(n)\frac{F_n^*}{k_n}\leq\sum_{j=\L_{i}-t_i}^{\L_i-1}\bar F(u_{n,i})+\bar F(u_{n,\L_i-t_i-1})\leq (t_i+1) \bar F_{max}.$$ The first inequality follows easily now by definition of $\eps(n)$.

\begin{propositionP}
\label{prop:main-estimate-1} For every, $n\in\N$, let $\mathbb A:=\mathbb A_n^{(q)}$ for $q$ defined by \eqref{def:q}. Consider the construction of the $k_n$ blocks above, the respective sizes $\ell_1, \ldots, \ell_{k_n}$ and time gaps $t_1,\ldots, t_{k_n}$. Recall that $\L_i=\sum_{j=1}^i\ell_i$. Assume that $n\in\N$ is large enough  so that $F_n^*/k_n<2$.  We have:
\begin{align*}
\nonumber \Bigg|\p\big(\mathscr W_{0,n}(\mathbb A)\big)&-\prod_{i=1}^{k_n}\left(1-\sum_{j=\L_{i-1}}^{\L_i-t_i-1}\p(A_{j})\right)\Bigg|\leq\sum_{i=1}^{k_n}\sum_{j=\L_{i-1}-t_i}^{\L_i-1}\p(A_j^{(q)})+  \sum_{j=\L_{k_n}}^{n-1}\p(A_j^{(q)}) \nonumber\\
&+\sum_{i=1}^{k_n}  \left|\sum_{j=0}^{\ell_i-t_i-1}\bigg(\p(A_{\L_{i-1}+j})\p(\mathscr W_{\L_i,\L_{k_n}-\L_{i}}(\mathbb A))-\p(A_{\L_{i-1}+j}\cap \mathscr W_{\L_i,\L_{k_n}-\L_{i}}(\mathbb A))\bigg)\right|\\
&+ \sum_{i=1}^{k_n} \sum_{j=0}^{\ell_i-1} \sum_{r>j}^{\ell_i-1}\p(A_{\L_{i-1}+j}\cap A_{\L_{i-1}+r}).
\end{align*}
 \end{propositionP}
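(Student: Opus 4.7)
The plan is to iterate Lemmas~\ref{lem:time-gap-1} and \ref{lem:inductive-step-1} block by block, peeling off one ``true block'' at a time from $\p(\mathscr W_{0,n}(\mathbb A))$. To simplify bookkeeping I first separate the tail by writing $\mathscr W_{0,n}(\mathbb A)=\mathscr W_{0,\L_{k_n}}(\mathbb A)\cap \mathscr W_{\L_{k_n},n-\L_{k_n}}(\mathbb A)$, whence a one-line union bound on the complement produces
\[
\left|\p(\mathscr W_{0,n}(\mathbb A))-\p(\mathscr W_{0,\L_{k_n}}(\mathbb A))\right|\leq \sum_{j=\L_{k_n}}^{n-1}\p(A_j^{(q)}),
\]
which is exactly the tail error term in the statement. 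It then remains to estimate $\p(\mathscr W_{0,\L_{k_n}}(\mathbb A))$.

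The heart of the proof is a block-peeling identity. For each $i=1,\ldots,k_n$, I apply Lemma~\ref{lem:time-gap-1} with parameters $(a,s,t,m)=(\L_{i-1},\ell_i-t_i,t_i,\L_{k_n}-\L_i)$ to extract the $i$-th time gap from $\mathscr W_{\L_{i-1},\L_{k_n}-\L_{i-1}}(\mathbb A)$, incurring an error $\sum_{j=\L_i-t_i}^{\L_i-1}\p(A_j^{(q)})$, which is the $i$-th summand of the first (gap) error term. Next, I apply Lemma~\ref{lem:inductive-step-1} with the same parameters to replace the resulting intersection $\p(\mathscr W_{\L_{i-1},\ell_i-t_i}(\mathbb A)\cap \mathscr W_{\L_i,\L_{k_n}-\L_i}(\mathbb A))$ by the factored form $\bigl(1-\sum_{j=\L_{i-1}}^{\L_i-t_i-1}\p(A_j^{(q)})\bigr)\p(\mathscr W_{\L_i,\L_{k_n}-\L_i}(\mathbb A))$, up to an error equal to the $i$-th summand of the mixing (third) and pairwise (fourth) terms of the proposition (noting $\ell_i-t_i-1\leq \ell_i-1$ for the pairwise piece). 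Combining these two estimates yields, for every $i$,
\[
\p(\mathscr W_{\L_{i-1},\L_{k_n}-\L_{i-1}}(\mathbb A))=\Bigl(1-\sum_{j=\L_{i-1}}^{\L_i-t_i-1}\p(A_j^{(q)})\Bigr)\p(\mathscr W_{\L_i,\L_{k_n}-\L_i}(\mathbb A))+E_i,
\]
with $|E_i|$ controlled by the $i$-th contribution to the three non-tail error terms.

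Substituting this identity successively for $i=1,2,\ldots,k_n$ into $\p(\mathscr W_{0,\L_{k_n}}(\mathbb A))$, and using $\mathscr W_{\L_{k_n},0}(\mathbb A)=\Y$ at the final stage, the structure telescopes to
\[
\p(\mathscr W_{0,\L_{k_n}}(\mathbb A))=\prod_{i=1}^{k_n}\Bigl(1-\sum_{j=\L_{i-1}}^{\L_i-t_i-1}\p(A_j^{(q)})\Bigr)+\sum_{i=1}^{k_n}C_i E_i,
\]
where $C_i=\prod_{m=1}^{i-1}\bigl(1-\sum_{j=\L_{m-1}}^{\L_m-t_m-1}\p(A_j^{(q)})\bigr)$. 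Provided $|C_i|\leq 1$ for every $i$, the triangle inequality gives $\bigl|\sum_i C_iE_i\bigr|\leq \sum_i|E_i|$, and regrouping recovers precisely the three non-tail error terms in the statement. Together with the tail estimate of the first paragraph, this gives the proposition.

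The single delicate point is the uniform bound $|C_i|\leq 1$: each factor $1-\sum_{j=\L_{m-1}}^{\L_m-t_m-1}\p(A_j^{(q)})$ lies in $[-1,1]$ iff the inner sum lies in $[0,2]$. The lower bound is trivial, and the upper bound follows from the block construction via
$\sum_{j=\L_{m-1}}^{\L_m-t_m-1}\p(A_j^{(q)})\leq \sum_{j=\L_{m-1}}^{\L_m-1}\bar F_j(u_{n,j})\leq F_n^*/k_n<2$,
using $\p(A_j^{(q)})\leq\bar F_j(u_{n,j})$, the defining property of $\ell_m$, and the standing hypothesis $F_n^*/k_n<2$; this is the unique place in the argument where that hypothesis enters.
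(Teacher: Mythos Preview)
Your proof is correct and follows essentially the same route as the paper's: separate the tail $\sum_{j=\L_{k_n}}^{n-1}\p(A_j^{(q)})$ via Lemma~\ref{lem:time-gap-1}, then peel off blocks one at a time by combining Lemmas~\ref{lem:time-gap-1} and \ref{lem:inductive-step-1}, and telescope using $|1-\sum_{j=\L_{m-1}}^{\L_m-t_m-1}\p(A_j^{(q)})|<1$ from the hypothesis $F_n^*/k_n<2$. Your explicit bookkeeping of the telescoping coefficients $C_i$ is a welcome clarification of what the paper compresses into ``we use \eqref{eq:induction-step-1} recursively.''
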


\begin{proof}
Using Lemma~\ref{lem:time-gap-1}, we have:
\begin{equation}
\label{eq:approx1-1}
\Big|\p(\mathscr W_{0,n}(\mathbb A))-\p(\mathscr W_{0,\L_{k_n}}(\mathbb A))\Big|\leq\sum_{j=\L_{k_n}}^{n-1}\p(A_j^{(q)}).\end{equation}

To simplify the notation let $\bar\L_i=\L_{k_n}-\L_{i-1}=\sum_{j=i}^{k_n} \ell_{j}$.
 It follows by using  \eqref{lem:inductive-step-1} that
\begin{align}
\Bigg|\p\left(\mathscr W_{\L_{i-1},\bar\L_{i}}(\mathbb A)\right)-&\bigg(1-\sum_{j=\L_{i-1}}^{\L_i-t_i-1}\p(A_{j})\bigg)\p\left(\mathscr W_{\L_{i},\bar\L_{i+1}}(\mathbb A)\right)\Bigg|\nonumber\\&\leq
\left|\p(\mathscr W_{\L_{i-1},\bar\L_{i}}(\mathbb A))-\p(\mathscr W_{\L_{i-1},\ell_i-t_i}(\mathbb A)\cap\mathscr W_{\L_{i},\bar\L_{i+1}}(\mathbb A))\right|\nonumber\\
&\quad+\left|\p(\mathscr W_{\L_{i-1},\ell_i-t_i}(\mathbb A)\cap\mathscr W_{\L_{i},\bar\L_{i+1}}(\mathbb A))-\Big(1-\sum_{j=\L_{i-1}}^{\L_i-t_i-1}\p(A_{j})\Big)\p(\mathscr W_{\L_{i},\bar\L_{i+1}}(\mathbb A))\right|\nonumber \\
&\leq \sum_{j=\L_{i-1}-t_i}^{\L_i-1}\p(A_j)+\left|\sum_{j=\L_{i-1}}^{\L_i-t_i-1}(\p(A_{j})\p(\mathscr W_{\L_{i-1},\bar\L_{i}}(\mathbb A))-\p(A_{j}\cap \mathscr W_{\L_{i-1},\bar\L_{i}}(\mathbb A))\right| \nonumber \\
&\quad+\sum_{j=0}^{\ell_i-1} \sum_{r>j}^{\ell_i-1}\p(A_{\L_{i-1}+j}\cap A_{\L_{i-1}+r}).\label{eq:induction-step-1}
\end{align}
Let
\begin{align}
\Upsilon_i&:=\sum_{j=\L_{i-1}-t_i}^{\L_i-1}\p(A_j)+\left|\sum_{j=\L_{i-1}}^{\L_i-t_i-1}(\p(A_{j})\p(\mathscr W_{\L_{i-1},\bar\L_{i}}(\mathbb A))-\p(A_{j}\cap \mathscr W_{\L_{i-1},\bar\L_{i}}(\mathbb A))\right|
\nonumber\\
&\qquad+\sum_{j=0}^{\ell_i-1} \sum_{r>j}^{\ell_i-1}\p(A_{\L_{i-1}+j}\cap A_{\L_{i-1}+r}).
\nonumber
\end{align}
Note that, for $i=k_n$ in (\ref{eq:induction-step-1}),  $\left|\mathscr W_{\L_{k_n-1},\bar\L_{k_n}}(\mathbb A))-\bigg(1-\sum_{j=\L_{k_n-1}}^{\L_{k_n}-t_{k_n}-1}\p(A_{j})\bigg)\right|\leq \Upsilon_{k_n}$.

Since $\frac{F_n^*}{k_n}<2$ and, by construction, for all $i=1,\ldots, k_n$, it is clear that $\sum_{j=\L_{i-1}}^{\L_i-t_i-1}\p(A_{j})\leq\frac{F_n^*}{k_n}$, then  $\left|1-\sum_{j=\L_{i-1}}^{\L_i-t_i-1}\p(A_{j})\right|<1$, for all $i=1,\ldots,k_n$.

Now, we use (\ref{eq:induction-step-1}) recursively and obtain
\begin{align}
\label{eq:recursive-estimate-1}
\left|\p(\mathscr W_{0,\L_{k_n}}(\mathbb A))-\prod_{i=1}^{k_n}\bigg(1-\sum_{j=\L_{i-1}}^{\L_i-t_i-1}\p(A_{j})\bigg)\right|\leq \sum_{i=1}^{k_n}\Upsilon_i.
\end{align}
The result follows now at once from (\ref{eq:approx1-1}) and (\ref{eq:recursive-estimate-1}).
\end{proof}

\subsection{Final argument}

We are now in a position to prove Theorem~\ref{thm:error-terms-general-SP-no-clustering}.

\begin{proof}[Proof of Theorem~\ref{thm:error-terms-general-SP-no-clustering}]

The theorem follows if we show that all the error terms in Proposition~\ref{prop:main-estimate-1} converge to 0, as $n\to\infty$.

For the first term, by choice of the $t_i$'s, we have
$$
\sum_{i=1}^{k_n}\sum_{j=\L_{i-1}-t_i}^{\L_i-1}\p(A_j^{(q)})\leq \sum_{i=1}^{k_n}\sum_{j=\L_{i-1}-t_i}^{\L_i-1}\bar F(u_{n,j})\leq k_n\eps(n)\frac{F_n^*}{k_n}=\eps(n)F_n^*,
$$
which tends to $0$ as $n\to\infty$, by (\ref{F_n}) and definition of $\eps(n)$.

Regarding the second term observe first that $$\sum_{j=\L_{k_n}}^{n-1}\p(A_j^{(q)})\leq \sum_{j=\L_{k_n}}^{n-1}\bar F(u_{n,j}).$$

Since, by choice of  $\ell_i$, we have $\frac{F_n^*}{k_n}\leq \sum_{j=\L_{i-1}}^{\L_i-1}\bar F(u_{n,j})+\bar F(u_{n,\L_i})\leq
 \sum_{j=\L_{i-1}}^{\L_i-1}\bar F(u_{n,j})+\bar F_{max}$, then it follows that
\begin{equation}
\label{eq:block-estimate}
 \frac{F_n^*}{k_n}-\bar F_{max}\leq\sum_{j=\L_{i-1}}^{\L_i-1}\bar F(u_{n,j})\leq \frac{F_n^*}{k_n}.
 \end{equation}
 From the first inequality we get $F_n^*-k_n\bar F_{max}\leq\sum_{i=1}^{k_n}\sum_{j=\L_{i-1}}^{\L_i-1}\bar F(u_{n,j})$, which implies that
 $$
 \sum_{j=\L_{k_n}}^{n-1}\bar F(u_{n,j})=F_n^*-\sum_{i=1}^{k_n}\sum_{j=\L_{i-1}}^{\L_i-1}\bar F(u_{n,j})\leq k_n\bar F_{max},$$
which goes to $0$ as $n\to\infty$ by \eqref{eq:kn-sequence}.

For the third term, recalling that, for each $n$ and $i$, $\gamma_i(q,n,t)$ from condition $\D_q(u_{n,i})$ is decreasing in $t$, we have:
\begin{equation*}
\sum_{i=1}^{k_n}  \left|\sum_{j=0}^{\ell_i-t_i-1}\bigg(\p(\A_{\L_{i-1}+j})\p(\mathscr W_{\L_i,\L_{k_n}-\L_{i}}(\mathbb A))-\p(\A_{\L_{i-1}+j}\cap \mathscr W_{\L_i,\L_{k_n}-\L_{i}}(\mathbb A))\bigg)\right|\leq \sum_{i=0}^{n-1}\gamma_i(q,n,t_n)\nonumber,
\end{equation*}
which tends to $0$ as $n\to\infty$ by condition $\D_q(u_{n,i})$.

By condition $\D'(u_n)$, we have that the fourth term goes to $0$ as $n\to\infty$.

Now, we will see that
\begin{align}
\nonumber&\left|\prod_{i=1}^{k_n}\left(1-\sum_{j=\L_{i-1}}^{\L_i-t_i-1}\p(\A_{j})\right)-e^{-\theta\tau}\right|\xrightarrow[n\to\infty]{} 0.
\end{align}

By \eqref{eq:EI} we have that $k_n\sum_{j=\L_{i-1}}^{\L_i-1}\p(\A_{j})=k_n \theta \sum_{j=\L_{i-1}}^{\L_i-1}\bar F(u_{n,j})+o(1)$. Then
\begin{equation*}
\sum_{j=\L_{i-1}}^{\L_i-1}\p(\A_{j})=\theta\sum_{j=\L_{i-1}}^{\L_i-1}\bar F(u_{n,j})+o(k_n^{-1}).
\end{equation*}
Since by \eqref{eq:kn-sequence}, we have $\bar F_{max}=o(k_n^{-1})$, then, by \eqref{eq:block-estimate}, it follows that
\begin{equation*}
\sum_{j=\L_{i-1}}^{\L_i-1}\bar F(u_{n,j})+o(k_n^{-1})=\frac{F_n^*}{k_n}+o(k_n^{-1}).
\end{equation*}
Also note that
\begin{equation*}
\sum_{j=\L_{i}-t_i}^{\L_i-1}\p(\A_{j})\leq \sum_{j=\L_{i}-t_i}^{\L_i-1}\bar F(u_{n,j})\leq \eps(n)\frac{F_n^*}{k_n}=o(k_n^{-1}).
\end{equation*}
Hence, for all $i=1,\ldots,k_n$ we have
\begin{equation*}
\sum_{j=\L_{i}-t_i}^{\L_i-t_i-1}\p(\A_{j})=\theta \frac{F_n^*}{k_n}+o(k_n^{-1}).
\end{equation*}
Finally, by \eqref{F_n}, we have
\begin{equation*}
\prod_{i=1}^{k_n}\left(1-\sum_{j=\L_{i}-t_i}^{\L_i-t_i-1}\p(\A_{j})\right)\sim\left(1-\theta\frac{F_n^*}{k_n}+o(k_n^{-1})\right)^{k_n}\xrightarrow[n\to\infty]{}\e^{-\theta \tau}.
\end{equation*}

Finally, by Proposition~\ref{prop:relation-balls-annuli-general}  we have
\begin{align}
\label{eq:error3}
\left|P_n-\p\left(\mathscr W_{0,n}\left(\mathbb A^{(q)}\right)\right)\right|&\leq\sum_{j=1}^{q} \p\left(\mathscr W_{0,n}\left(\mathbb A^{(q)}\right)\cap \left(\{X_{n-j}> u_{n,n-j}\}\setminus\{A_{n-j}^{(q)}\}\right)\right)\nonumber\\
&\leq\sum_{j=1}^{q} \p \left(\{X_{n-j}> u_{n,n-j}\}\setminus\{A_{n-j}^{(q)}\}\right)\nonumber\\
&\leq\sum_{j=1}^{q} (1-F_{n-j}(u_{n,n-j})),
\end{align}
which converges to $0$ as $n\to\infty$.

Note that when $q=0$ both sides of inequality \eqref{eq:error3} equal 0.
\end{proof}

\section{Sequential Dynamical Systems}
\subsection{General presentation}
In this section we will give a first example of a non-stationary process, by considering  families ${\mathcal F}$ of non-invertible maps  defined
on compact subsets $X$ of $\mathbb{R}^d$ or on the torus $\mathbb{T}^d$
(still denoted with $X$ in the following),  and non-singular with respect
to the Lebesgue or the Haar measure,
i.e. $m(A) \not= 0 \implies m(T(A)) \not= 0$.
Such measures will be defined on the Borel sigma algebra ${\mathcal B}$. We will be mostly concerned with the case $d=1.$  A countable sequence of maps $\{T_k\}_{k\ge 1}\in \mathcal{F}$  defines a {\em sequential dynamical system}. A {\em sequential orbit} of $x\in X$  will be defined by the concatenation
\begin{equation}\label{m}
\TF_n (x) :=T_n\circ\cdots\circ T_1 (x), \ n\ge 1.
\end{equation}
We  denote by $P_{j}$ the Perron-Fr\"obenius (transfer) operator associated to $T_{j}$ defined by the duality relation
$$
\int_X P_{j}f \ g\ dm\ = \ \int_X f\ g\circ T_{j} \ dm,\;\; \mbox{ for all } f\in L^1_m, \ g\in L^{\infty}_m.
$$
Note that here the transfer operator $P_{j}$ is defined with respect to the reference Lebesgue measure $m$.

Similarly to (\ref{m}), we define the composition of operators as
\begin{equation}\label{o}
\Pi_n:=P_n\circ\cdots\circ P_1, \ n\ge 1.
\end{equation}
It is easy to check that duality persists under concatenation, namely
\begin{equation}\label{c}
\int_X g  (\TF_n) \  f \ dm=\int_X g (T_n\circ\cdots\circ T_1) \ f\ dm\ =\ \int_X g(\ P_n\circ\cdots\circ P_1f )\ dm\ =  \int_X g\ (\Pi_n f) \ dm.
\end{equation}

In \cite{CR07} the authors begin a systematic study of the statistical properties of sequential dynamical systems by proving in particular the law of large numbers and the central limit theorem. In \cite{HNTV}, it was shown that the Almost Sure Invariance Principle still holds. In order to establish such results a few assumptions are needed and some of them are also relevant for the extreme value theory. We will recall them in this section and then we will provide a list of examples which will go beyond the $\beta$ transformations, which was the prototype case investigated by Conze and Raugi.  \\

We first need to
choose a suitable couple of adapted spaces in order to get and exploit the quasi-compactness of the transfer operator.  We will consider in particular  a Banach space ${\mathcal V} \subset L^1_m$
($1\in\mathcal{V}$)
of functions over $X$ with norm $||\cdot||_{\alpha}$, such that
$\|\phi\|_{\infty} \le C \|\phi\|_{\alpha}$.

 For example, we could let  ${\mathcal V} $ be the Banach space  of bounded variation functions over $X$ with norm $||\cdot||_{BV}$ given by the sum of the $L^1_m$ norm and the total variation $|\cdot|_{BV},$ or we could take ${\mathcal V}$ to be the space of quasi-H\"older functions with a suitable norm which we will define later on.\\
One of the basic assumption is the following:\\

{\bf Uniform Doeblin-Fortet-Lasota-Yorke inequality (DFLY)}: There exist constants $A, B<\infty, \rho\in(0,1),$ such that for any $n$ and any sequence of operators $P_n,\cdots,P_1$ associated to transformations in $\mathcal{F}$ and any $f\in {\mathcal V}$ we have
\begin{equation}\label{LY2}
\|P_n\circ \cdots\circ P_1 f\|_{\alpha}\le A \rho^n \|f\|_{\alpha}+B\|f\|_1.
\end{equation}\\
At this point one would like to dispose of a  sort of quasi-compactness argument which would allow to get exponential decay for the composition of operators. In all the examples  we will present, the class $\mathcal{F}$ will be constructed {\em around} (this will be made clear in a moment)  a given map $T_0$ for which the corresponding operator $P_0$ will satisfy quasi-compactness. Namely we require:

{\bf Exactness property}: The operator $P_0$ has a spectral gap, which
implies that there are two constants $C_1<\infty$ and $\gamma_0\in(0,1)$
so that
\begin{equation}\label{Exa}
 ||P_0^n f||_{\alpha}\le C_1 \gamma_0^n||f||_{\alpha}
\end{equation} for all $f\in \mathcal{V}$ of zero (Lebesgue) mean and $n\ge 1$.\\

The next step is to  consider  the following distance between two operators $P$ and $Q$ associated to maps in $\mathcal{F}$ and acting on $\mathcal{V}$:
$$
d(P,Q)= \sup_{f\in \mathcal{V},\; \|f\|_{\alpha}\le 1}||P f-Q f||_1.
$$

A very useful criterion  is given in Proposition 2.10 in \cite{CR07}, and in our setting it reads: {\em if $P_0$ verifies the exactness property, then there exists $\delta_0>0,$ such that the set $\{P\in {\mathcal F};  d(P,P_0)<\delta_0\}$ satisfies the decorrelation  \emph{(DEC)} condition}, where

\noindent {\bf Property (DEC):}
Given the  family $\mathcal {F}$    there exist constants $\hat{C}>0, \hat{\gamma}\in(0,1),$ such that for any $n$ and any sequence of  transfer operators $P_n,\cdots,P_1$ corresponding to maps
chosen from $\mathcal {F}$ and any $f\in {\mathcal V}$ of zero (Lebesgue) mean\footnote{Actually, the definition of the (DEC) property in \cite{CR07} is slightly more general since it requires the above property for functions in a suitable subspace, not necessarily that of functions with zero expectation.}, we have
\begin{equation}\label{dec}
\|P_n\circ \cdots\circ P_1 f\|_{\alpha}\le \hat{C}\hat{\gamma}^n \|f\|_{\alpha}.
\end{equation}

By
induction on the Doeblin-Fortet-Lasota-Yorke inequality for compositions we immediately
have
\begin{equation}\label{VC}
 d(P_r \circ \cdots \circ P_1, P_0^r)\le M \sum_{j=1}^r d(P_j, P_0),
\end{equation}
with $M=1+A\rho^{-1}+B.$

According to~\cite[Lemma~2.13]{CR07}, (\ref{Exa}) and (\ref{VC}) imply that there exists a constant
$C_2$ such that
$$
\|P_n\circ\cdots\circ P_1\phi-P_0^n\phi\|_{1}\le C_2\|\phi\|_{BV}\left(\sum_{k=1}^pd(P_{n-k+1},P_0)+(1-\gamma_0)^{-1}\gamma_0^p\right)
$$
 for all
integers $p\le n$ and all functions $\phi\in \mathcal{V}$. We will use this bound to get a quantitative rate of the exponential decay for composition of  operators in the $L^1_m$ norm when we relate it to the following two assumptions: \\

{\bf  Lipschitz continuity property}: Assume that the maps (and their transfer
operators) are parametrized by a sequence of numbers $\eps_k$, $k\in
\mathbb{N}$, such that $\lim_{k\to\infty}\eps_k=\eps_0$ ($P_{\eps_0}=P_0$).
We assume that there exists a constant $C_3$ so that
$$
 d(P_{\eps_k},P_{\eps_j})\le C_3|\eps_k-\eps_j|,\qquad \text{ for all $k, j\ge 0$}.
$$
We will restrict in the following to the subclass ${\mathcal F}_{exa}$ of maps, and therefore of operators, for which
$$
{\mathcal F}_{exa}:=\{P_{\eps_k}\in {\mathcal F};\  |\eps_k-\eps_0|<C_3^{-1}\delta_0\}.
$$
The maps in ${\mathcal F}_{exa}$ will therefore verify the (DEC) condition, but we will sometimes need something stronger, namely:  \\

{\bf Convergence property}: We require algebraic convergence of the
parameters, that is, there exist a constant $C_4$ and $\kappa>0$ so that
$$
 |\eps_n-\eps_0|\le \frac{C_4}{n^{\kappa}}\qquad \forall n\ge1.
$$

With these last assumptions, we get a polynomial decay for (\ref{VC}) of the
type $O(n^{-\kappa})$ and in particular we obtain the same algebraic
convergence in $L^1_m$ of $P_n\circ\cdots\circ P_1\phi$ to $h \int \phi \,dm$,
where $h$ is the density of the absolutely continuous mixing measure of the
map $T_0.$

\subsection{Stochastic processes for sequential systems}
\label{subset:observable-choice}
Similarly to \cite{FFT10} (in the context of stationary deterministic systems), we consider that the time series $X_0, X_1,\ldots$ arises from these sequential systems simply by evaluating a given observable $\varphi:X\to\R\cup\{\pm\infty\}$ along the sequential orbits.
\begin{equation}
\label{eq:def-stat-stoch-proc-DS} X_n=\varphi\circ \TF_n,\quad \mbox{for
each } n\in {\mathbb N}.
\end{equation}
Note that, contrary to the setup in \cite{FFT10},   the stochastic process $X_0, X_1,\ldots$ defined in this way is not necessarily stationary.

We assume that the r.v. $\varphi:X\to\R\cup\{\pm\infty\}$
achieves a global maximum at $\zeta\in X$ (we allow
$\varphi(\zeta)=+\infty$) being of following form:
\begin{equation}
\label{eq:observable-form}
\varphi(x)=g\big(\dist(x,\zeta)\big),
\end{equation} where $\zeta$ is a chosen point in the
phase space $X$ and the function $g:[0,+\infty)\rightarrow {\mathbb
R\cup\{+\infty\}}$ is such that $0$ is a global maximum ($g(0)$ may
be $+\infty$); $g$ is a strictly decreasing bijection $g:V \to W$
in a neighbourhood $V$ of
$0$; and has one of the
following three types of behaviour:
\begin{enumerate}
\item[Type $g_1$:] there exists some strictly positive function
$h:W\to\R$ such that for all $y\in\R$
\begin{equation}\label{eq:def-g1}\displaystyle \lim_{s\to
g_1(0)}\frac{g_1^{-1}(s+yh(s))}{g_1^{-1}(s)}=\e^{-y};
\end{equation}
\item [Type $g_2$:] $g_2(0)=+\infty$ and there exists $\beta>0$ such that
for all $y>0$
\begin{equation}\label{eq:def-g2}\displaystyle \lim_{s\to+\infty}
\frac{g_2^{-1}(sy)}{g_2^{-1}(s)}=y^{-\beta};\end{equation}
\item [Type $g_3$:] $g_3(0)=D<+\infty$ and there exists $\gamma>0$ such
that for all $y>0$
\begin{equation}\label{eq:def-g3}\lim_{s\to0}
\frac{g_3^{-1}(D-sy)}{g_3^{-1}(D-s)}= y^\gamma.
\end{equation}
\end{enumerate}

It may be shown that no non-degenerate limit applies if $\int _0^{g_1(0)}g_1^{-1}(s)ds$ is not finite. Hence, an appropriate choice of $h$ in the Type 1 case is given by $h(s)=\int_ s^{g_1(0)} g_1^{-1}(t) dt / g_1^{-1}(s)$ for $s<g_1(0)$.

Examples of each one of the three types are as follows:
$g_1(x)=-\log x$ (in this case \eqref{eq:def-g1} is easily verified
with $h\equiv1$), $g_2(x)=x^{-1/\alpha}$ for some $\alpha>0$ (condition
\eqref{eq:def-g2} is verified with $\beta=\alpha$) and
$g_3(x)=D-x^{1/\alpha}$ for some $D\in\R$ and $\alpha> 0$ (condition
\eqref{eq:def-g3} is verified with $\gamma=\alpha$).

 \subsection{Examples}
 We now give a few examples of sequential systems  satisfying the preceding  assumptions. The family of maps $\mathcal{F}$ will be parametrized by a small positive number
 $\varepsilon$ (or a vector with small positive components) and  we will tacitly suppose that we restrict to ${\mathcal F}_{exa}$ having previously proved that the transfer operator $P_0$ for a reference map $T_0$ is exact. This will impose restrictions on the choice of $\varepsilon$ (less than a constant  times $\delta_0,$ see above), and in this case we will use the terminology {\em for $\varepsilon$ small enough}. The verification of the DFLY condition, which in turn will imply the analogous condition for the unperturbed operator $P_0$ will usually follow from standard arguments and  the exactness of $P_0$ will be proved by assuming the existence of a unique mixing absolutely continuous invariant measure (for instance by adding further properties to the map $T_0$), or alternatively by restricting to one of the finitely many mixing components prescribed by the quasi-compactness of $P_0.$\\

 The following examples have already been introduced and treated in \cite{HNTV}, but in the latter paper a  much stronger condition was required, namely that
 there exists $\delta>0$ such that for any sequence  $P_n,\cdots,P_1$ in $\mathcal{F}$  we have
 the uniform lower bound
\begin{equation}\label{MIN}
\inf_{x\in M}P_n\circ \cdots\circ P_11(x)\ge \delta,  \quad \forall n\ge 1.
\end{equation}
We do not need that property in the context of EVT.
 \subsubsection{$\beta$ transformation}
 Let $\beta>1$ and denote by $T_{\beta}(x)=\beta x$ mod $1$ the
$\beta$-transformation on the unit circle. Similarly, for $\beta_k\ge 1+c > 1$, $ k=1,2,\dots$,
we have the transformations $T_{\beta_k}$ of the same kind, $x\mapsto \beta_kx$ mod $1$.
Then ${\mathcal F}=\{T_{\beta_k}:k\}$ is the family of transformations we want to consider here.
The property~(DEC) was proved in~\cite[Theorem 3.4~(c)]{CR07}and continuity~(Lip) is
precisely the content of Sect.~5 still in \cite{CR07}.
\subsubsection{Random additive noise}

In this second example we consider   piecewise uniformly expanding maps $T$ on the unit interval $M=[0,1]$ which preserve a unique absolutely continuous invariant measure $\mu$ which is also mixing. We denote by $A_k, k=1,\dots,m$ the $m$ open intervals of monotonicity of the map $T$ which give a partition mod-$0$ of the unit interval.  The map $T$ is $C^2$ over the $A_k$ and with a $C^2$ extension on the boundaries. We put  $\min_{x\in M} |DT(x)|\ge \lambda>1;   \max_{x\in M} |DT(x)|\le \Lambda; \ \sup_{x\in M}\left|\frac{D^2T_{\eps}(x)}{DT_{\eps}(x)}\right|\leq C_1<\infty.$ We will perturb  with additive noise, namely  we will consider a family of maps $\mathcal{F}$ given by $T_{\eps}(x)=T(x)+\eps,$ where $\eps \in U$ and such that  $\forall \eps \in U$ we have the images $T_{\eps}A_k, k=1,\dots,m$  {\em strictly} included in $[0,1].$ We will also suppose that $\exists A_w$ such that $\forall T_{\eps}\in {\mathcal F}$ and $k=1,\dots,m: T_{\eps}A_k\supset A_w;$ moreover there exists $1\ge L'>0$ such that $\forall k=1,\dots,m$ and $\forall T_{\eps}\in {\mathcal F},$ $|T_{\eps}(A_w)\cap A_k|>L'.$
 These conditions are useful in obtaining  distortion bounds.  We note that our assumptions are satisfied if we consider $C^2$ uniformly expanding  maps on the circle and again perturbed with additive noise, without, this time, any restriction of the values of $\eps.$ In particular,  the intervals of local injectivity
$A_k, \ k=1,\cdots,m,$ of $T_{\eps}$ are now independent of $\eps$. The functional space $\mathcal{V}$ will coincide with the functions of bounded variation with norm $||\cdot||_{BV}.$\\
The (DFLY) inequality follows easily with standard arguments.
  The next step is to show that two operators are close when the relative perturbation parameters are close: we report here for completeness the short proof already given in \cite{HNTV}.  We thus consider the difference $||\hat{P}_{\eps_1}f-\hat{P}_{\eps_2}f||_1,$ with $f$ in BV. We have
  $$
 \hat{P}_{\eps_1}f(x)-\hat{P}_{\eps_2}f(x)=\sum_{l=1}^m f\cdot{\bf 1}_{U^c_n}(T^{-1}_{\eps_1,l}x)\left[\frac{1}{DT_{\eps_1}(T^{-1}_{\eps_1,l}x)}-\frac{1}{DT_{\eps_2}(T^{-1}_{\eps_2,l}x)}\right]+
 $$
 $$
 \sum_{l=1}^m\frac{1}{DT_{\eps_2}(T^{-1}_{\eps_2,l}x)}[f\cdot{\bf 1}_{U^c_n}(T^{-1}_{\eps_1,l}x)-f\cdot{\bf 1}_{U^c_n}(T^{-1}_{\eps_2,l}x)]=E_2(x)+E_3(x).
 $$
 In the formula above we considered, without restriction,  the derivative positive and moreover we discarded those points $x$ which have only one pre-image in each interval of monotonicity. After integration this will give an error $(E_1)$  as $E_1\le  4m|\eps_1-\eps_2|||\hat{P}_{\eps}f||_{\infty}.$ But $||\hat{P}_{\eps}f||_{\infty}\le ||f||_{\infty}\sum_{l=1}^m\frac{DT_{\eps_2}(T^{-1}_{\eps_2,l}x')}{DT_{\eps_2}
 (T^{-1}_{\eps_2,l}x)}\frac{1}{DT_{\eps_2}(T^{-1}_{\eps_2,l}x')},$
 where  $x'$ is the point  where $DT_{\eps_2}(T^{-1}_{\eps_2,l}x')|A_l|\ge \eta$, being $\eta$ the minimum length of $T(A_k), k=1,\dots,m.$ But the first ratio in the previous sum is simply bounded by the distortion constant $D_c=\Lambda\lambda^{-1}$; therefore we get
 $$
 E_1\le 4m|\eps_1-\eps_2| ||f||_{\infty} \frac{D_c}{\eta} \sum_{l=1}^m |A_l|\le  4m|\eps_1-\eps_2| ||f||_{\infty} \frac{D_c}{\eta}.
 $$

We now bound $E_2.$ The term in the square bracket and for given $l$ (we drop this index in the derivatives in  the next formulas), will be equal to $\frac{D^2T(\xi)}{[DT(\xi)]^2}|T^{-1}_{\eps_1}(x)-T^{-1}_{\eps_2}(x)|$, being $\xi$ a point in the interior of $A_l.$ The first factor is uniformly bounded by $C_1.$ Since $x=T_{\eps_1}(T^{-1}_{\eps_1}(x))=T((T^{-1}_{\eps_1}(x))+\eps_1=T((T^{-1}_{\eps_2}(x))+\eps_2=
T_{\eps_2}(T^{-1}_{\eps_2}(x)),$ we have that $|T^{-1}_{\eps_1}(x)-T^{-1}_{\eps_2}(x)|=|\eps_1-\eps_2||DT(\xi')|^{-1},$ where $\xi'$ is in $A_l.$ Replacing  $\xi'$ by $T^{-1}_{\eps_1,l}x$, because of distortion, we get
$$
\int |E_2(x)| dx\le |\eps_1-\eps_2| C_1 D_c\int \left[\sum_{l=1}^m |f(T^{-1}_{\eps_1,l})|\frac{1}{DT_{\eps_1}(T^{-1}_{\eps_1,l}x)}\right]dx=
$$
$$
|\eps_1-\eps_2| C_1 D_c\int P_{\eps_1}(|f|)(x) dx=|\eps_1-\eps_2| C_1 D_c||f||_1.
$$
To bound the last term we use the formula (3.11), in \cite{CR07},
$$
\int \sup_{|y-x|\le t}|f(y)-f(x)|dx\le 2t\mbox{Var}(f),
$$
by observing again that $|T^{-1}_{\eps_1}(x)-T^{-1}_{\eps_2}(x)|=|\eps_1-\eps_2||DT(\xi')|^{-1},$ where $\xi'$ is in $A_l.$ By integrating $E_3(x)$ we get
$$
\int |E_{3}(x)|dx\le \ 2m \lambda^{-2}\  |\eps_1-\eps_2|\mbox{Var}(f{\bf 1}_{U^c_n})\le
$$
$$
 10m \lambda^{-2}\  |\eps_1-\eps_2|\mbox{Var}(f).
$$
Putting together the three errors we finally get that there exists a constant $\tilde{C}$ such that
$$
||\hat{P}_{\eps_1}f-\hat{P}_{\eps_2}f||_1\le \tilde{C} |\eps_1-\eps_2|||f||_{BV},
$$
and we can complete the argument as in  the first example of $\beta$ transformations.
\subsubsection{Multidimensional maps}
 We give here a multidimensional version of the maps
considered in the preceding section; these maps were extensively
investigated in \cite{S00, HV09a, AFV15, AFLV11, HNVZ13}  and we defer to those papers
for more details. Let $M$ be a compact subset of $\mathbb{R}^N$
which is the closure of its non-empty interior. We take a map $T:M\to M$
and let $\mathcal{A}=\{A_i\}_{i=1}^m$ be a finite family of disjoint open sets such
that the Lebesgue measure of $M\setminus\bigcup_{i}A_i$ is zero, and there
exist open sets $\tilde{A_i}\supset\overline{A_i}$ and $C^{1+\alpha}$ maps
$T_i: \tilde{A_i}\to \mathbb{R}^N$, for some real number $0<\alpha\leq 1$
and some sufficiently small real number $\eps_1>0$, such that
\begin{enumerate}
\item $T_i(\tilde{A_i})\supset B_{\eps_1}(T(A_i))$ for each $i$, where
  $B_{\eps}(V)$ denotes a neighborhood of size $\eps$ of the set $V.$ The
  maps $T_i$ are the local extensions of $T$ to the $\tilde{A_i}.$

\item there exists a constant $C_1$ so that for each $i$ and $x,y\in T(A_i)$ with
$\mbox{dist}(x,y)\leq\eps_1$,
$$
|\det DT_i^{-1}(x)-\det DT_i^{-1}(y)|\leq C_1|\det DT_i^{-1}(x)|\mbox{dist}(x,y)^\alpha;
$$

\item there exists $s=s(T)<1$ such that $\forall x,y\in T(\tilde{A}_i) \textrm{ with } \mbox{dist}(x,y)\leq\eps_1$, we have
$$\mbox{dist}(T_i^{-1}x,T_i^{-1}y)\leq s\, \mbox{dist}(x,y);$$

\item each $\partial A_i$ is a codimension-one embedded compact
  piecewise $C^1$ submanifold and
  \begin{equation}\label{sc}s^\alpha+\frac{4s}{1-s}Z(T)\frac{\gamma_{N-1}}{\gamma_N}<1,\end{equation}
  where $Z(T)=\sup\limits_{x}\sum\limits_i \#\{\textrm{smooth pieces
    intersecting } \partial A_i \textrm{ containing }x\}$ and $\gamma_N$ is
  the volume of the unit ball in $\mathbb{R}^N$.
\end{enumerate}

Given such a map $T$, we define locally on each $A_i$ the map $T_{\eps}\in \mathcal{F}$ by
$T_{\eps}(x):=T(x)+\eps$, where now $\eps$ is an $n$-dimensional vector with
all the components of absolute value less than one. As in the previous
example the translation by $\eps$ is allowed if the image $T_{\eps}A_i$
remains in $M$: in this regard, we could play with the sign of the
components of $\eps$ or not move the map at all. As in the one dimensional case,
we shall also make the following  assumption on ${\mathcal F}$.
We assume that there exists a set $A_w\in \mathcal{A}$ satisfying:

\begin{itemize}
\item [(i)] $A_w\subset T_{\eps}A_k$ for all $\forall \;T_{\eps}\in{\mathcal F}$ and
  for all $k=1,\dots,m$.

\item [(ii)]  $TA_w$ is the whole $M$, which in turn implies that there exists $1\ge L'>0$ such that $\forall k=1,\dots,q$ and $\forall T_{\eps}\in {\mathcal F},$ $\mbox{diameter}(T_{\eps}(A_w)\cap A_k)>L'.$
\end{itemize}

As $\mathcal V\subset \mathscr{L}^1(m)$ we use the space of quasi-H\"older functions,
for which we refer again to \cite{S00, HV09a}. On this space, the transfer operator  satisfies a Doeblin-Fortet-Lasota-Yorke
  inequality.
  Finally, Lipschitz continuity has been proved for additive noise in Proposition~4.3 in~\cite{AFV15}.
\subsubsection{Covering maps: a general class}
We now present a more general class of examples which were introduced in~\cite{BV13} to study
metastability for randomly perturbed maps. As before, the family ${\mathcal F}$ will be constructed
 around a given map $T$ which is again defined on the unit interval $M$. We therefore begin by introducing such map $T$. \\
 {\bf (A1)} There exists a partition $\mathcal{A}=\{A_i:i=1,\dots,m\}$ of $M$, which consists
 of pairwise disjoint intervals $A_i$. Let $\bar A_i :=[c_{i,0},c_{i+1,0}]$. We assume there
 exists $\delta>0$  such that $T_{i,0}:= T|_{(c_{i,0},c_{i+1,0})}$ is $C^2$ and extends to a
  $C^2$ function $\bar T_{i,0}$ on a neighbourhood $[c_{i,0}-\delta,c_{i+1,0}+\delta]$ of $\bar A_i $ ;\\
 {\bf (A2)} There exists $\beta_0<\frac12$ so that $\inf_{x\in I\setminus {\mathcal C}_0}|T'(x)|\ge\beta_0^{-1}$, where
  ${\mathcal C}_0=\{c_{i,0}\}_{i=1}^{m}$.
 \\ We note that Assumption {\bf (A2)}, more precisely the fact that
 $\beta_0^{-1}$ is strictly bigger than $2$ instead of $1$, is sufficient
 to
 get the uniform Doeblin-Fortet-Lasota-Yorke inequality~\eqref{LLYY} below, as explained in Section 4.2 of \cite{GHW11}. We now construct the family ${\mathcal F}$ by choosing maps $T_{\eps}\in {\mathcal F}$ close to $T_{\eps=0}:=T$ in the following way:\\
 Each map $T_{\eps}\in {\mathcal F}$ has $m$ branches and there exists a partition of $M$
 into intervals $\{A_{i,\eps}\}_{i=1}^{m}$, $A_{i,\eps}\cap A_{j,\eps}=\emptyset$ for $i\not= j$, $\bar A_{i,\eps} :=[c_{i,\eps},c_{i+1,\eps}]$ such that\\
 \begin{itemize}
 \item [(i)] for each $i$ one has that
   $[c_{i,0}+\delta,c_{i+1,0}-\delta]\subset
   [c_{i,\eps},c_{i+1,\eps}]\subset [c_{i,0}-\delta,c_{i+1,0}+\delta]$;
   whenever $c_{1,0}=0$ or $c_{q+1},0=1$, we {\em do not move} them with
   $\delta$. In this way, we have established a one-to-one correspondence
   between the unperturbed and the perturbed extreme points of $A_i$ and
   $A_{i, \eps}$. (The quantity $\delta$ is from Assumption~(A1) above.)

 \item [(ii)] the map $T_{\eps}$ is locally injective over the closed
   intervals $\overline{A_{i,\eps}}$, of class $C^2$ in their interiors,
   and expanding with $\inf_x|T_{\eps}'x|>2$. Moreover there exists
   $\sigma>0$ such that $\forall T_{\eps} \in {\mathcal F}, \forall i=1,\cdots,
   m$ and $\forall x \in [c_{i,0}-\delta,c_{i+1,0}+\delta]\cap \overline{A_{i,\eps}}$ where $c_{i,0}$
   and $c_{i, \eps}$ are two (left or right) {\em corresponding points}, we
   have:
   \begin{equation}\label{C1}
     |c_{i,0}-c_{i, \eps}|\le \sigma
   \end{equation}
   and
   \begin{equation}\label{C2}
     |\bar{T}_{i,0}(x)-T_{i, \eps}(x)|\le \sigma.
   \end{equation}
 \end{itemize}

 Under these assumptions and by taking, with obvious notations, a
 concatenation of $n$ transfer operators, we have the uniform
 Doeblin-Fortet-Lasota-Yorke
 inequality, namely there exist $\eta\in(0,1)$ and $B<\infty$ such that, for all $f\in
 BV$, all $n$ and all concatenations of $n$ maps of ${\mathcal F}$, we have
\begin{equation}\label{LLYY}
||P_{\eps_n}\circ \cdots\circ P_{\eps_1}f||_{BV}\le \eta^n||f||_{BV}+B||f||_1.
\end{equation}
About the continuity~(Lip): looking carefully at the proof
  of the continuity for the expanding map of the intervals, one sees that it  extends
   to the actual case if one gets the following bounds:
\begin{equation}\label{BB}
\left.\begin{array}{r}|T^{-1}_{\eps_1}(x)-T^{-1}_{\eps_2}(x)|\\ |DT_{\eps_1}(x)-DT_{\eps_2}(x)|
\end{array}\right\}=O((|\eps_1-\eps_2|),
\end{equation}
where the point $x$ is in the same domain of injectivity of the maps $T_{\eps_1}$ and $T_{\eps_2}$, the comparison of the {\em same} functions and derivative in two {\em different} points being controlled   by the condition (\ref{C1}).  The bounds~\eqref{BB} follow easily by adding to~\eqref{C1}, \eqref{C2} the further assumptions that $\sigma=O(\eps)$ and requiring a continuity condition for
derivatives like~\eqref{C2} and with $\sigma$ again being of order $\eps$.

\section{EVT for the  sequential systems: an example of uniformly expanding map}
In this section, 
we will  give a detailed analysis of the application of the general result obtained in Section 2 to 
a particular sequential system. It is constructed with $\beta$ transformations; similar approach and technique can be used to treat the other examples of sequential systems introduced above with suitable adaptations  and modifications.
 We  point out that in this example we will take $u_{n,i}=u_n$, where $(u_n)_{n\in\N}$ satisfies $n\mu(U_n)=n\mu(X_0>u_n)\to\tau$, as $n\to\infty$ for some $\tau>0$, where $\mu$ is the invariant measure of the original map $T_\beta$.\\

Consider the family of maps on the unit circle $S^1=[0,1]$, with  the identification $0\sim1$, given by $T_{\beta}(x)=\beta x$ mod $1$ for $\beta>1+c$, with $c>0$. Note that for many such $\beta$, we have that $T_\beta(1)\neq 1$ and, by the identification $0\sim 1$, this means that $T_\beta$ as a map on $S^1$ is not continuous at $\zeta=0\sim1$. For simplicity we assume that $T_\beta(0)=0$ but consider that the orbit of $1$ is still defined to be $T_\beta(1), T_\beta^2(1),\ldots$ although, strictly speaking, $1\sim0$ should be considered a fixed point. In what follows $m$ denotes Lebesgue measure on $[0,1]$.

\begin{theorem}
Consider an unperturbed map $T_\beta$ corresponding to some $\beta=\beta_0>1+c$, with invariant absolutely continuous probability $\mu=\mu_\beta$. Consider a sequential system acting on the unit circle and given by $\TF_n=T_{n}\circ\cdots\circ T_1$, where $T_i=T_{\beta_{i-1}}$, for all $i=1,\ldots,n$ and $|\beta_n-\beta|\leq n^{-\xi}$ holds for some $\xi>1$.  Let $X_1, X_2,\ldots$ be defined by \eqref{eq:def-stat-stoch-proc-DS}, where the observable function $\varphi$, given by \eqref{eq:observable-form}, achieves a global maximum at a chosen $\zeta\in[0,1]$. Let $(u_n)_{n\in\N}$ be such that $n\mu(X_0>u_n)\to\tau$, as $n\to\infty$ for some $\tau\geq 0$. Then, there exists $0<\theta\leq 1$ such that
$$\lim_{n\to\infty}m(X_0\leq u_n,X_1\leq u_n,\ldots,X_{n-1}\leq u_n)=\e^{-\theta\tau}.$$
The value of $\theta$ is determined by the behaviour of $\zeta$ under the original dynamics $T_\beta$, namely,
\begin{itemize}

\item If the orbit of $\zeta$ by $T_\beta$ never hits $0\sim1$ and $\zeta$ is periodic of prime period $p$ \footnote{$T_\beta^p(\zeta)=\zeta$ and $p$ is the minimum integer with such property} then $\theta=1-\beta^{-p}$;

\item If the orbit of $\zeta$ by $T_\beta$ never hits $0\sim1$ and $\zeta$ is not periodic then $\theta=1$;

\item If $\zeta=0\sim1$ and $1$ is not periodic \footnote{$T^n_\beta(1)\neq0\sim1$ for all $n$}, then $\theta=\frac{d\mu}{dm}(0)(1-\beta^{-1})+\frac{d\mu}{dm}(1)$

\item If $\zeta=0\sim1$ and $1$ is periodic of prime period $p$ then $\theta=\frac{d\mu}{dm}(0)(1-\beta^{-1})+\frac{d\mu}{dm}(1)(1-\beta^{-p})$.
\end{itemize}

\end{theorem}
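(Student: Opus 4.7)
The plan is to apply Theorem~\ref{thm:error-terms-general-SP-no-clustering} to the sequential $\beta$-transformation process, after (i) identifying $q$ in each of the four cases, (ii) verifying conditions $\D_q(u_{n,i})$ and $\D'_q(u_{n,i})$, and (iii) computing the extremal index $\theta$ prescribed by \eqref{eq:EI}. The hypothesis $|\beta_n-\beta|\le n^{-\xi}$ with $\xi>1$ is exactly the algebraic Convergence property, which combined with the standard uniform DFLY inequality and exactness of $P_\beta$ gives, via Lemma~2.13 of \cite{CR07}, that $\|\Pi_n \mathbf 1 - h_\beta\|_1\to 0$ at rate $O(n^{-\xi})+O(\gamma_0^n)$, where $h_\beta=d\mu/dm$.

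\textbf{Determining $q$ and the value of $\theta$.} The definition \eqref{def:q} forces $q=0$ whenever the target ball $U_n=\{\varphi>u_n\}$ around $\zeta$ has the property that $\TF_i(x)\in U_n$ and $\TF_{i+r}(x)\in U_n$ cannot both hold (for large $n$, $r$ bounded) unless $r$ exceeds any fixed integer. For $\zeta$ non-periodic under $T_\beta$ and avoiding $0\sim 1$, the continuity and non-return of the orbit of $\zeta$ give $q=0$; for $\zeta$ periodic of prime period $p$, one has $T_\beta^p(U_n)\supset U_n\cap T_\beta^{-p}(U_n)$ with positive mass, while no smaller return occurs, so $q=p$. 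For $\zeta=0\sim 1$ one must keep track of \emph{both} sides of the discontinuity: the left side $[1-a_n,1]$ behaves as the orbit of $1$ does under $T_\beta$ (giving $q=p$ if $1$ is periodic, $q=0$ otherwise), and the right side $[0,b_n]$ returns to itself in one step through $T_\beta(x)=\beta x$; both are absorbed by $q=\max(1,p)$.

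\textbf{Verification of $\D_q(u_{n,i})$ and $\D'_q(u_{n,i})$.} Writing $\A_{n,i}=\TF_i^{-1}(E_i)$ and $\mathscr W_{i+t,\ell}(\mathbb A^{(q)})=\TF_{i+t}^{-1}(G_{i,t,\ell})$, the duality \eqref{c} yields
\begin{equation*}
\p\bigl(\A_{n,i}\cap \mathscr W_{i+t,\ell}\bigr)=\int \mathbf 1_{G_{i,t,\ell}}\cdot \Pi_{i,t}\bigl(\mathbf 1_{E_i}\cdot \Pi_i\mathbf 1\bigr)\,dm,
\end{equation*}
where $\Pi_{i,t}=P_{i+t}\circ\cdots\circ P_{i+1}$. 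Using (DEC) applied to the zero-mean function $\mathbf 1_{E_i}\Pi_i\mathbf 1-\p(\A_{n,i})\cdot \Pi_i\mathbf 1$ (whose BV-norm is bounded by a uniform multiple of $|E_i|_{BV}+\|\Pi_i\mathbf 1\|_{BV}$, both uniformly controlled by the uniform DFLY), the correlation splits as a product up to an error $\gamma_i(q,n,t)\le C\hat\gamma^t$. Choosing $t_n^*=\lfloor (\log n)^2\rfloor$ gives $\sum_{i=0}^{n-1}\gamma_i(q,n,t_n^*)\to 0$ and $t_n^*\bar F_{\max}\to 0$, verifying $\D_q(u_{n,i})$. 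For $\D'_q(u_{n,i})$, for $r-j>q$ one bounds
\begin{equation*}
\p(\A_{\L_{i-1}+j}\cap \A_{\L_{i-1}+r})\le \int \mathbf 1_{E'}\cdot \Pi_{j,r-j}\bigl(\mathbf 1_{E}\cdot \Pi_{\L_{i-1}+j}\mathbf 1\bigr)\,dm,
\end{equation*}
and, after the short gap of length $q$, one is in the regime where either the expansion $\beta^{r-j}\to\infty$ makes the image of $U_n$ cover the whole circle (and the transfer operator pushes forward to near $h_\beta$), producing a product-type estimate summing to $O(k_n(\bar F_{\max}F_n^*))=o(1)$, using $k_n\bar F_{\max}\to 0$ from \eqref{eq:kn-sequence}.

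\textbf{Computation of $\theta$ and assembling the conclusion.} The condition \eqref{eq:EI} reduces to showing $\p(\A_{n,j})/\bar F(u_n)\to \theta$ uniformly (after averaging over blocks). For $q=p\ge 1$ with $\zeta\ne 0$ periodic, the set $E_j=\{y\in U_n:\, T_\beta^p y\notin U_n\}$ has Lebesgue measure $(1-\beta^{-p})\,|U_n|$ since $(T_\beta^p)'(\zeta)=\beta^p$; multiplying by $\Pi_j\mathbf 1\to h_\beta$ and dividing by $\bar F(u_n)\sim h_\beta(\zeta)|U_n|$ gives $\theta=1-\beta^{-p}$. For $\zeta=0\sim 1$, split $U_n=[0,b_n]\cup[1-a_n,1]$; on $[0,b_n]$ the first-step return set is $[0,b_n/\beta]$, so the escape proportion is $1-\beta^{-1}$ weighted by $h_\beta(0)$; on $[1-a_n,1]$ one iterates the orbit of $1$, giving full escape (weight $h_\beta(1)$) if $1$ is non-periodic, and $(1-\beta^{-p})h_\beta(1)$ if $1$ has prime period $p$. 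Summing the two contributions yields exactly the four formulas in the statement. The main obstacle in this program is keeping uniform control of the BV norms of $\mathbf 1_{E_i}\Pi_i\mathbf 1$ as $i$ varies, which requires using both the uniform DFLY bound \eqref{LY2} and the fact that $U_n$ is a finite union of intervals of uniformly bounded total variation; once this is done, Theorem~\ref{thm:error-terms-general-SP-no-clustering} delivers $\mathbf P_n\to \e^{-\theta\tau}$.
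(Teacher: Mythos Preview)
Your outline follows the paper's strategy, but there is a genuine gap in the verification of $\D'_q(u_{n,i})$ in the periodic case, and this is precisely the place where the hypothesis $|\beta_n-\beta|\le n^{-\xi}$ with $\xi>1$ does real work. After applying decay of correlations you obtain (as in the paper) a bound of the type
\[
S_n'\le C\, n L_n\, m(U_n)^2 + C'\, n\, m(U_n)\,\lambda^{\tilde R_n^{(q)}},
\]
where $\tilde R_n^{(q)}=\min_{i\ge n^\gamma}R_{n,i}^{(q)}$. The second term only vanishes if $\tilde R_n^{(q)}\to\infty$, and your treatment (``after the short gap of length $q$\ldots'') does not establish this. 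You argue that ``no smaller return occurs, so $q=p$'', but that only shows $q\ge p$; the content of \eqref{def:q} is that for $q=p$ one must prove $R_{n,i}^{(p)}\to\infty$, i.e.\ that once an orbit enters $A_i^{(p)}$ (escapes $U_n$ after $p$ steps) it does not re-enter $U_n$ for any prescribed number $J$ of further iterates. The paper proves this by a careful distance computation: writing $\eps_n:=|\beta_{n^\gamma}-\beta|$, the sequential iterates of $\zeta$ stay within $C\eps_n$ of the true periodic orbit, and since $\xi>1$ one has $\eps_n=o(n^{-1})=o(m(U_n))$, which is exactly what makes the expanding lower bound $(\beta-\eps_n)^{(s-1)p}\,m(U_n)/2 - C\eps_n > m(U_n)/2$ hold for all $s\le J$. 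If $\eps_n$ decays more slowly than $m(U_n)$ this fails --- the paper's Section~4.5 exhibits a sequence with $\eps_j=j^{-\alpha}$, $\alpha<1$, for which the periodic structure at $\zeta=2/3$ is destroyed and one gets $\theta=1$ instead of $1-\beta^{-1}$. So the rate hypothesis is not merely a device to get $\|\Pi_n\mathbf 1-h_\beta\|_1\to0$; it is essential to the geometry of the escape sets, and your sketch does not use it there.

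Two smaller omissions: you do not verify \eqref{F_n}, i.e.\ $\sum_{i=0}^{n-1}m(X_i>u_n)\to\tau$, which in the paper requires Lemma~4.1 plus the estimate $\sum_i\|\Pi_i\mathbf 1-P^i\mathbf 1\|_1<\infty$ (again using $\xi>1$); and your claimed bound $O(k_n\bar F_{\max}F_n^*)$ for the $\D'_q$ sum is not what the computation gives --- the product term is $O(nL_nm(U_n)^2)=O(\tau^2/k_n)$, and the exponential term carries the crucial factor $m(U_n)$ from $\|\psi\|_1$, which is what keeps $n\,m(U_n)\,\lambda^{\tilde R_n^{(q)}}\approx\tau\lambda^{\tilde R_n^{(q)}}$ bounded. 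Apart from these points your computation of $\theta$ via \eqref{eq:EI} and your treatment of $\D_q$ match the paper.
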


 We remark that if the decay rate of  $|\beta_n-\beta|$ is slower than in the statement of the theorem then the observed extremal index for the sequential system at periodic points of the original dynamics may be $1$ as shown in  Section~\ref{subsec:counter-example}.

  \subsection{Preliminaries}
 As we said above, we let  $\mu$ denote the invariant measure of the original map $T_\beta$ and let $h=\frac{d\mu}{dm}$ be its density.

We assume throughout this subsection that there exists $\xi>1$ such that
\begin{equation}
\label{eq:beta-rate}
|\beta_{n}-\beta|\leq \frac1{n^\xi}.
\end{equation}
Also let $0<\gamma<1$ be such that $\gamma\xi>1$. In what follows $P$ denotes the transfer operator associated to the unperturbed map $T_\beta$. Recall that $\Pi_i=P_i\circ\ldots\circ P_1$, where $P_i$ is the transfer operator associated to $T_i=T_{\beta_i}$, while $P^i$ is the corresponding concatenation for the unperturbed map $T_\beta$.
Note that by \cite[Lemma~3.10]{CR07}, we have
\begin{equation}
\label{eq:PF-approximation}
\left\| \Pi_i(g)-\int g dm\; h\right\|_1\leq C_1\frac{\log i}{i^\xi}\|g\|_{BV}.
\end{equation}

Consider a measurable set $A\subset [0,1]$. Then
\begin{align*}
m(\TF_j^{-1}(A))&=\int \I_A\circ T_j\circ\ldots\circ T_1 dm =\int  \I_A \Pi_j(1) dm\\
&=\int \I_{A} hdm + \int \I_{A}(\Pi_j(1)-h)dm.
\end{align*}
 By \eqref{eq:PF-approximation}, if $j\geq n^\gamma$ (recall that $\gamma\xi>1$) then we have $\int |\Pi_j(1)-h| dm\leq C_1 \frac{\log i}{i^\xi}=o(n^{-1}),$ which allows us to write:
 \begin{equation}
 \label{eq:iterated-measure}
 m(\TF_j^{-1}(A))=\mu(A)+o(n^{-1}).
 \end{equation}

\subsubsection{Verification of  condition \eqref{F_n}, \ie $\lim_{n\to\infty}\sum_{i=0}^{n-1}m(X_i>u_n)=\tau$.}
We start with the following lemma.
\begin{lemma}
\label{lem:m-stationarity}
We have that
$$
\lim_{n\to\infty}\sum_{i=0}^{n-1}\int_{U_n} P^i(1) \,dm=\tau.
$$
\end{lemma}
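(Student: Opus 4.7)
The plan is to rewrite the integrand using duality, split the sum at a carefully chosen cutoff, and exploit the exactness of the unperturbed transfer operator $P$. First, by the defining duality relation for $P$, one has $\int_{U_n} P^i(1)\,dm = m(T_\beta^{-i}U_n)$, so the task is to show $\sum_{i=0}^{n-1} m(T_\beta^{-i}U_n)\to\tau$. The natural expectation is that most of the mass comes from large $i$, where $P^i(1)$ is close to the invariant density $h$, producing terms $\approx \mu(U_n)\sim\tau/n$.

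Fix $\gamma\in(0,1)$ with $\gamma\xi>1$ and set $i_n=\lfloor n^\gamma\rfloor$. For the tail $i\ge i_n$, I would use the exactness (spectral gap) of $P$ on $\mathcal V=BV$: there exist $C>0$ and $\gamma_0\in(0,1)$ with $\|P^i(1)-h\|_{BV}\le C\gamma_0^i$, hence $\|P^i(1)-h\|_\infty\le C'\gamma_0^i$ as well. Therefore
\begin{equation*}
\sum_{i=i_n}^{n-1}\int_{U_n}P^i(1)\,dm=(n-i_n)\mu(U_n)+\sum_{i=i_n}^{n-1}\int_{U_n}\bigl(P^i(1)-h\bigr)dm,
\end{equation*}
where the first summand tends to $\tau$ by hypothesis, and the second is bounded in absolute value by $C' m(U_n)\,\gamma_0^{i_n}/(1-\gamma_0)$, which goes to $0$ since $m(U_n)=O(1/n)$ (this uses that the Parry density $h$ is bounded on $[0,1]$ and bounded below in a neighbourhood of $\zeta$, so that $m(U_n)$ is comparable to $\mu(U_n)\sim\tau/n$).

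For the head $0\le i<i_n$, the DFLY inequality~\eqref{LY2} for $T_\beta$ gives $\|P^i(1)\|_{BV}\le A\rho^i+B$, so $\|P^i(1)\|_\infty\le C''$ uniformly in $i$. Then
\begin{equation*}
0\le\sum_{i=0}^{i_n-1}\int_{U_n}P^i(1)\,dm\le C'' i_n\, m(U_n)\le C''' n^{\gamma-1}\xrightarrow[n\to\infty]{}0,
\end{equation*}
since $\gamma<1$. Combining the two pieces yields the limit $\tau$.

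The only delicate point is the uniform $L^\infty$ bound on $m(U_n)$ relative to $\mu(U_n)$ near $\zeta$; for $\zeta$ away from the jumps of the Parry density this is immediate from continuity and positivity of $h$ at $\zeta$, while at $\zeta=0\sim1$ (an endpoint of the domain) one handles left and right neighbourhoods separately using the explicit piecewise-constant structure of $h$. Everything else is bookkeeping.
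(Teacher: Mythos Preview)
Your argument is correct, but it is more elaborate than needed and imports machinery that is irrelevant to this particular lemma. The paper's proof does not split the sum at all: since the spectral gap estimate $\|P^i(1)-h\|_\infty\le C\alpha^i$ holds for \emph{every} $i\ge 0$ (not just large $i$), one writes directly
\[
\sum_{i=0}^{n-1}\int_{U_n}P^i(1)\,dm = n\mu(U_n) + \sum_{i=0}^{n-1}\int_{U_n}\bigl(P^i(1)-h\bigr)\,dm,
\]
and bounds the second sum by $\frac{C}{1-\alpha}\,m(U_n)\to 0$, using only that $m(U_n)\to 0$. Your split at $i_n=\lfloor n^\gamma\rfloor$ and the separate DFLY bound on the head are redundant, and they force you to control $i_n\,m(U_n)$, which in turn requires the quantitative comparison $m(U_n)\asymp\mu(U_n)=O(1/n)$ via properties of the Parry density; the paper avoids this entirely. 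Also, the constraint $\gamma\xi>1$ that you impose has no role here: this lemma concerns only the unperturbed operator $P$, and the convergence rate $\xi$ of the $\beta_n$ never enters. So your approach works, but the paper's is shorter and uses strictly weaker inputs.
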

\begin{proof}
By hypothesis, for all $j\in\N$ and $g\in BV$ we have $P^j(g)=h\int g \cdot h \,dm + Q^j(g)$, where $\|Q^j(g)\|_\infty\leq \alpha^j \|g\|_{BV}$, for some $\alpha<1$. Then we can write:
\begin{align*}
\sum_{i=0}^{n-1}\int_{U_n} P^i(1)dm&=\sum_{i=0}^{n-1}\int h \left(\int 1\cdot h dm\right) \1_{U_n} dm+\sum_{i=0}^{n-1} \int Q^i(1)\1_{U_n} dm\\
&=\sum_{i=0}^{n-1}\int_{U_n} h dm +\sum_{i=0}^{n-1} \int Q^i(1)\1_{U_n} dm\\
&=n \mu(U_n)+\sum_{i=0}^{n-1} \int Q^i(1)\1_{U_n} dm.
\end{align*}
The result follows if we show that the second term on the r.h.s. goes to 0, as $n\to\infty$. This follows easily since
$$
\sum_{i=0}^{n-1} \int Q^i(1)\1_{U_n} dm\leq \sum_{i=0}^{n-1} \alpha^i \int \1_{U_n} dm =\frac{1-\alpha^n}{1-\alpha}m(U_n)\xrightarrow[n\to\infty]{}0.
$$
\end{proof}

Since
$$
\sum_{i=0}^{n-1}m(X_i>u_n)=\sum_{i=0}^{n-1}\int_{U_n} \Pi_i(1)dm=\sum_{i=0}^{n-1}\int_{U_n} P^i(1)dm+ \sum_{i=0}^{n-1}\int_{U_n} \Pi_i(1)-P^i(1)dm,
$$
then condition \eqref{F_n} holds if we prove that the second term on the r.h.s. goes to 0 as $n\to\infty$.

Let $\varepsilon>0$ be arbitrary. Now, since $\xi>1$ then $\sum_{i\geq0} \frac{\log i}{i^\xi}<\infty$, so there exists $N\in\N$ such that $C_0\sum_{i\geq N} \frac{\log i}{i^\xi}<\eps/2$.

On the other hand, using the Lasota-Yorke inequalities for both $\Pi$ and $P$, we have that there exists some $C>0$ such that $| \Pi_i(1)-P^i(1)|\leq C$, for all $i\in\N$. Let $n$ be sufficiently large so that $C N m(U_n)<\eps/2$. Then
\begin{align*}
 \sum_{i=0}^{n-1}\int_{U_n} \Pi_i(1)-P^i(1)dm&= \sum_{i=0}^{N-1}\int_{U_n} \Pi_i(1)-P^i(1)dm+ \sum_{i=N}^{\infty}\int_{U_n} \Pi_i(1)-P^i(1)dm\\
 &\leq C N m(U_n)+ C_0\sum_{i\geq N} \frac{\log i}{i^\xi} <\eps/2+\eps/2=\eps.
\end{align*}

\subsection{Verification of $\D_q(u_n)$}
\label{subsec:D-check}
We start by proving the following statement about decay of correlations, which is just a slightly more general statement then the one proved in \cite[Section~3]{CR07}. \begin{proposition}
\label{prop:decay-correlations}
Let $\phi\in BV$ and $\psi\in L^1(m)$. Then for the $\beta$ transformations $T_n=T_{\beta_n}$ we have that
\[
\left|\int \phi\circ \TF_i\psi \circ \TF_{i+t}dm-\int \phi\circ \TF_i dm\int  \psi \circ \TF_{i+t} dm \right|\leq \mbox{B} \lambda ^t\|\phi\|_{BV}\|\psi\|_{1},
\]
for some $\lambda<1$ and $\mbox{B}>0$ independent of $\phi$ and $\psi$.
\end{proposition}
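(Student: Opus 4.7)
The plan is to unpack the correlation using the duality relation~\eqref{c} together with the \textbf{(DEC)} property for concatenations of $\beta$-transformations. Setting $\tilde{\Pi}_{i,t}:=P_{i+t}\circ\cdots\circ P_{i+1}$ and $c:=\int \phi\circ\TF_i\,dm=\int\Pi_i(1)\,\phi\,dm$, iterating~\eqref{c} twice yields
\begin{align*}
\int \phi\circ\TF_i\,\psi\circ\TF_{i+t}\,dm &= \int \tilde\Pi_{i,t}\bigl(\Pi_i(1)\,\phi\bigr)\,\psi\,dm,\\
\int\phi\circ\TF_i\,dm\cdot\int\psi\circ\TF_{i+t}\,dm &= c\int\tilde\Pi_{i,t}\bigl(\Pi_i(1)\bigr)\,\psi\,dm,
\end{align*}
where I used $\Pi_{i+t}(1)=\tilde\Pi_{i,t}(\Pi_i(1))$ for the second identity. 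Subtracting,
\[
\int \phi\circ\TF_i\,\psi\circ\TF_{i+t}\,dm-\int\phi\circ\TF_i\,dm\cdot\int\psi\circ\TF_{i+t}\,dm=\int\tilde\Pi_{i,t}(g_i)\,\psi\,dm,
\]
with $g_i:=\Pi_i(1)(\phi-c)$.

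The crucial observation is that $g_i$ has zero Lebesgue mean: $\int g_i\,dm=\int\phi\circ\TF_i\,dm-c\int\Pi_i(1)\,dm=c-c=0$. This is why the correct centering is by $c\,\Pi_i(1)$ rather than by the constant $c$; that is the only subtlety compared to the case $i=0$ treated in \cite{CR07}. Since the sequential system is built from $\beta$-transformations, the family satisfies both the uniform (DFLY) inequality~\eqref{LY2} and the (DEC) condition~\eqref{dec} on $\mathcal V=BV$ for mean-zero functions. Applying (DEC) to $g_i$ through the concatenation $\tilde\Pi_{i,t}$ gives
\[
\|\tilde\Pi_{i,t}(g_i)\|_{BV}\leq \hat C\,\hat\gamma^{\,t}\,\|g_i\|_{BV}.
\]

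To finish, I would bound $\|g_i\|_{BV}$ using that $BV$ is a Banach algebra (so $\|\Pi_i(1)(\phi-c)\|_{BV}\leq K\|\Pi_i(1)\|_{BV}\|\phi-c\|_{BV}$), the uniform (DFLY) estimate applied to the constant $1\in BV$ to get $\sup_i\|\Pi_i(1)\|_{BV}\leq A+B$, and $|c|\leq\|\phi\|_\infty\leq\|\phi\|_{BV}$, yielding $\|g_i\|_{BV}\leq C'\|\phi\|_{BV}$ uniformly in $i$. Finally, using the continuous embedding $\|\cdot\|_\infty\leq C\|\cdot\|_{BV}$,
\[
\Bigl|\int\tilde\Pi_{i,t}(g_i)\,\psi\,dm\Bigr|\leq \|\tilde\Pi_{i,t}(g_i)\|_\infty\|\psi\|_1\leq C\hat C C'\,\hat\gamma^{\,t}\|\phi\|_{BV}\|\psi\|_1,
\]
which is the announced inequality with $\lambda=\hat\gamma$ and $\mathrm{B}=C\hat C C'$ independent of $i,t,\phi,\psi$.

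The main technical point to get right is not any single estimate but the algebraic setup: one must resist the temptation of subtracting the constant $c$ (which would leave a residual $c\int(\tilde\Pi_{i,t}(1)-\Pi_{i+t}(1))\psi\,dm$ that need not be small), and instead center by $c\,\Pi_i(1)$ so that the $(DEC)$ estimate applies to a genuinely zero-mean $BV$ function with a norm bound uniform in the starting time~$i$.
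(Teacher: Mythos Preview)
Your proof is correct and follows essentially the same route as the paper: center $\phi$ by $c=\int\phi\,\Pi_i(1)\,dm$ so that $g_i=\Pi_i(1)(\phi-c)$ has zero Lebesgue mean, apply the (DEC) estimate to the concatenation $P_{i+t}\cdots P_{i+1}(g_i)$, and then bound $\|g_i\|_{BV}$ uniformly in $i$ via (DFLY). The only cosmetic difference is that the paper also centers $\psi$, writing the correlation as $\int\tilde\psi\,P_{i+t}\cdots P_{i+1}(\Pi_i(1)\tilde\phi)\,dm$ with $\tilde\psi=\psi-\int\psi\,\Pi_{i+t}(1)\,dm$; since $P_{i+t}\cdots P_{i+1}(\Pi_i(1)\tilde\phi)$ already has zero integral this extra centering is harmless but redundant, and your more direct version is equivalent.
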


\begin{remark}
\label{rem:dec-corr}
Note that as it can be seen in \cite[Section~3]{CR07},
Proposition \ref{prop:decay-correlations}  holds for any sequence $T_{\beta_1}, T_{\beta_2}, \ldots$  of $\beta$ transformations and not necessarily only for the  ones that  satisfy condition \eqref{eq:beta-rate}.
\end{remark}

\begin{proof}
Using the adjoint property, write
\begin{align*}
D\!C(\phi,\psi,i,t):=&\int \phi\circ \TF_i\psi \circ \TF_{i+t}dm-\int \phi\circ \TF_i dm\int  \psi \circ \TF_{i+t} dm \\
=&\int \psi P_{i+t}\ldots P_{i+1}(\phi\Pi_{i}(1)) dm-\int \phi \Pi_i(1)dm\int  \psi \Pi_{i+t}(1) dm .
\end{align*}

Using the fact that the Perron-Frobenius operators preserve integrals we have
$$
\int \phi \Pi_i(1)dm\int  \psi \Pi_{i+t}(1) dm=\iint   \psi \Pi_{i+t}(1) dm \,P_{i+t}\ldots P_{i+1}(\phi\Pi_{i}(1)) dm.
$$
By linearity we also have
$$
\int \phi \Pi_i(1)dm\int  \psi \Pi_{i+t}(1) dm=\int  \psi P_{i+t}\ldots P_{i+1}\left(\int \phi \Pi_i(1)dm \,\Pi_{i}(1)\right) dm.
$$
Again linearity and preservation of the integrals allow us to write:
$$
\int \phi \Pi_i(1)dm\int  \psi \Pi_{i+t}(1) dm=\iint  \psi \Pi_{i+t}(1) dm \,P_{i+t}\ldots P_{i+1}\left(\int \phi \Pi_i(1)dm \,\Pi_{i}(1)\right)dm.
$$
Consequently we have
\begin{align*}
D\!C(\phi,\psi,i,t)&=  \int \psi P_{i+t}\ldots P_{i+1}(\phi\Pi_{i}(1)) dm- \iint   \psi \Pi_{i+t}(1) dm \,P_{i+t}\ldots P_{i+1}(\phi\Pi_{i}(1)) dm\\
&-\int  \psi P_{i+t}\ldots P_{i+1}\left(\int \phi \Pi_i(1)dm \,\Pi_{i}(1)\right) dm\\&+ \iint  \psi \Pi_{i+t}(1) dm \,P_{i+t}\ldots P_{i+1}\left(\int \phi \Pi_i(1)dm \,\Pi_{i}(1)\right)dm\\
&= \int\left(\psi-\int\psi \Pi_{i+t}(1)dm\right)\,P_{i+t}\ldots P_{i+1}\left(\Pi_i(1)\left(\phi-\int\phi \Pi_{i}(1)dm\right)\right).
\end{align*}
Let $\tilde\phi= \phi-\int\phi \Pi_{i}(1)dm$. Observe that $\int \Pi_i(1)\tilde \phi dm=0.$ This means that the observable function $\Pi_i(1)\tilde \phi\in \mathcal V_0$, where $\mathcal V_0$ is the set of functions with 0 integral that was defined in \cite[Lemma~2.12]{CR07}. Moreover, by
(DFLY), there exists a constant $C_0$ independent of $\phi$ and $\psi$ such that $\|\Pi_i(1)\tilde \phi\|_{BV}\leq 3C_0\|\phi\|_{BV}$.

As it has been shown in \cite[Section~3]{CR07}, condition (Dec) of the same paper is satisfied for any sequence of $\beta$ transformations as considered here. It follows that for all $g\in \mathcal V_0$ and $i\in\N$ we have that $\|P_{i+t}\ldots P_{i+1}(g)\|_{BV}\leq K\lambda ^t \|g\|_{BV}$, for some $K>0$ and $\lambda<1$ independent of $g$, which applied to $\Pi_i(1)\tilde \phi$ gives:
\begin{equation}
\label{eq:spectral-gap}
\|P_{i+t}\ldots P_{i+1}(\Pi_i(1)\tilde \phi)\|_{BV}\leq 3KC_0\lambda ^t \|\phi\|_{BV}.
\end{equation}
Let $\tilde \psi=\psi-\int\psi \Pi_{i+t}(1)dm$. Again, by \cite[(2.4)]{CR07}, we have $\|\tilde\psi\|_1\leq 2C_0 \|\psi\|_1$. Hence, using \eqref{eq:spectral-gap} we obtain
\begin{align*}
|D\!C(\phi,\psi,i,t)|&=\left|\int\tilde\psi\,P_{i+t}\ldots P_{i+1}\left(\Pi_i(1)\tilde\phi\right)dm\right|\\
	&\leq \|P_{i+t}\ldots P_{i+1}(\Pi_i(1)\tilde \phi)\|_{BV}\int |\tilde\psi| dm\\
	&\leq 6KC_0^2 \lambda ^t \|\phi\|_{BV} \|\psi\|_1.
\end{align*}
\end{proof}

Condition $\D_q(u_{n,i})$ follows from Proposition~\ref{prop:decay-correlations} by taking for each $i\in\N$,
\begin{equation*}
\phi_i=\I_{D_{n,i}^{(q)}} \mbox{ and }\psi_i= \I_{D_{n,i+t}^{(q)}}.\I_{D_{n,i+t+1}^{(q)}}\circ T_{i+t+1}.\cdots.\I_{D_{n,i+t+\ell}^{(q)}}\circ T_{i+t+\ell}\circ\ldots\circ T_{i+t+1},
\end{equation*}
where for every $j\in\N$ we define
\begin{equation}
\label{eq:D-def}
D_{n,j}^{(q)}= U_n\cap T_{j+1}^{-1}(U_n^c)\cap\ldots\cap T_{j+q}^{-1}(U_n^c).
\end{equation}
Since we assume that \eqref{eq:beta-rate} holds, there exists a constant $C>0$ depending on $q$ but not on $i$ such that $\|\phi_i\|_{BV}<C$. Moreover, it is clear that $\|\psi_i\|\leq 1$. Hence,
\begin{multline*}
\left|\p\left(\A_{n,i}\cap
 \mathscr W_{i+t,\ell}\left(\mathbb A_n^{(q)}\right) \right)-\p\left(\A_{n,i}\right)
  \p\left(\mathscr W_{i+t,\ell}\left(\mathbb A_n^{(q)}\right)\right)\right|=\\\left|\int \phi_i\circ \TF_i\psi_i \circ \TF_{i+t}dm-\int \phi_i\circ \TF_i dm\int  \psi_i \circ \TF_{i+t} dm \right|\leq \mbox{const}\; \lambda^t.
\end{multline*}
Thus, if we take $\gamma_i(q,n,t)=\mbox{const} \lambda^t$ and $t_n=(\log n)^2$ condition $\D_q(u_{n,i})$ is trivially satisfied.

\subsection{Verification of condition $\D'_q(u_n)$} We start by noting that we may neglect the first $n^\gamma$ random variables of the process $X_0, X_1, \ldots$, where $\gamma$ is such that $\gamma\xi>1$, for $\xi$ given as in \eqref{eq:beta-rate}.

In fact, by Lemma~\ref{lem:time-gap-1} and (DFLY)
we have
\begin{align*}
m(\max\{X_{n^\gamma}, \ldots, X_{n-1}\}\leq u_n)-m(M_n\leq u_n)&\leq \sum_{i=0}^{n^{\gamma}-1} m(X_i>u_n)= \sum_{i=0}^{n^{\gamma}-1} \int \I_{U_n}\Pi_i(1) dm\\
&\leq C_0 n^{\gamma}m(U_n)\xrightarrow[n\to\infty]{}0.
\end{align*}
This way, we simply disregard the $n^\gamma$ random variables of $X_0, X_1, \ldots$ and start the blocking procedure, described in Section~\ref{subsec:blocks}, in $X_{n^\gamma}$ by taking $\L_0=n^\gamma$. We split the remaining $n-n^{\gamma}$ random variables into $k_n$ blocks as described in Section~\ref{subsec:blocks}. Our goal is to show that
$$
S_n':=\sum_{i=1}^{k_n} \sum_{j=0}^{\ell_i-1} \sum_{r>j}^{\ell_i-1}m(\A_{\L_{i-1}+j}\cap \A_{\L_{i-1}+r})
$$
goes to $0$.

We define for some $i,n,q\in\N_0$,
\begin{align*}
&R_{n,i}^{(q)}:=\min\left\{j>i:\; \I_{A_i^{(q)}}\cdot\I_{A_j^{(q)}}(x)>0 \mbox{ for some $x\in[0,1]$}\right\},\\
&\tilde R_n^{(q)}:=\tilde R_n^{(q)}(n^\gamma)=\min\{R_{n,i}^{(q)},\; i=n^\gamma, \ldots,n\},\\
&L_n=\max\{\ell_{n,i},\; i=1,\ldots, k_n\}.
\end{align*}

We have
$$
S_n'\leq \sum_{i=n^\gamma}^{n}\sum_{j>i+R_{n,i}^{(q)}}^{L_n} m\left(A_i^{(q)}\cap A_j^{(q)}\right)= \sum_{i=n^\gamma}^{n}\sum_{j>i+R_{n,i}^{(q)}}^{L_n} \int \I_{D_{n,i}^{(q)}}\circ \TF_i \cdot \I_{D_{n,j}^{(q)}}\circ \TF_j\, dm,
$$
where $D_{n,i}^{(q)}$ and $D_{n,j}^{(q)}$ are given as in \eqref{eq:D-def}. Using Proposition~\ref{prop:decay-correlations}, with $\phi=\I_{D_{n,i}^{(q)}}$ and $\psi= \I_{D_{n,j}^{(q)}}$ and the adjoint property of the operators, it follows that
$$
\int \I_{D_{n,i}^{(q)}}\circ \TF_i \cdot \I_{D_{n,j}^{(q)}}\circ \TF_j\, dm\leq \int \I_{D_{n,i}^{(q)}} \Pi_i(1)dm  \int \I_{D_{n,j}^{(q)}} \Pi_j(1)dm+B \lambda^{j-i}\|\I_{D_{n,i}^{(q)}}\|_{BV}\|\I_{D_{n,j}^{(q)}}\|_1.
$$
Using (DFLY)
and since there exists some $C_2>0$ (independent of $n$) such that $\|\I_{D_{n,i}^{(q)}}\|_{BV}\leq C_2$, we have
$$
\int \I_{D_{n,i}^{(q)}}\circ \TF_i \cdot \I_{D_{n,j}^{(q)}}\circ \TF_j\, dm\leq C_0^2m(U_n)^2+BC_2\lambda^{j-1}m(U_n).
$$
Hence,
\begin{align*}
S_n'&\leq \sum_{i=n^\gamma}^{n}\sum_{j\geq i+R_{n,i}^{(q)}}^{L_n} \left(C_0^2m(U_n)^2+BC_2\lambda^{j-1}m(U_n)\right)\leq C_0^2nL_nm(U_n)^2+BC_2 m(U_n) n \sum_{k\geq \tilde R_n^{(q)}}^{L_n} \lambda^k\\
&\leq C_0^2nL_nm(U_n)^2+BC_2 m(U_n) n \lambda^{\tilde R_n^{(q)}}\frac1{1-\lambda}.
\end{align*}
Now we show that
\begin{equation}
\label{eq:Ln-estimate}
L_n=\frac{n}{k_n}(1+o(1)).
\end{equation}
To see this, observe that each $\ell_{n_i}$ is defined, in this case, by the largest integer $\ell_n$ such that
$
\sum_{j=s}^{s+\ell_n-1}m(X_j>U_n)\leq \frac{1}{k_n}\sum_{j=n^\gamma}^{n-1}m(X_j>u_n).
$
Using \eqref{eq:iterated-measure}, it follows that $\ell_n\mu(U_n)(1+o(1))\leq \frac{n-n^\gamma}{k_n}\mu(U_n)(1+o(1)).$ On the other hand, by definition of $\ell_n$ we must have $\sum_{j=s}^{s+\ell_n-1}m(X_j>U_n)>\frac{1}{k_n}\sum_{j=n^\gamma}^{n-1}m(X_j>u_n)-m(X_{s+\ell_n}>u_n).$ Using \eqref{eq:iterated-measure} again, we have $\ell_n\mu(U_n)(1+o(1))> \frac{n-n^\gamma}{k_n}\mu(U_n)(1+o(1))-\mu(U_n)(1+o(1))$. Together with the previous inequality, \eqref{eq:Ln-estimate} follows at once.

Using estimate \eqref{eq:Ln-estimate}, the fact that $\lim_{n\to\infty}n\mu(U_n)=\tau$ and $h\in BV$, we have that there exists some positive constant $C$ such that
$$
S_n'\leq C\left(\frac{1}{k_n}+\lambda^{\tilde R_n^{(q)}}\right).
$$

In order to prove that $\D'_q(u_n)$ holds, we need to show that $\tilde R_n^{(q)}\to\infty$, as $n\to\infty$, for all $q\in\N_0$.
To do that we have to split the proof in several cases. First, we have to consider the cases when the orbit of $\zeta$ hits $1$ or not. Then for each of the previous two cases, we have to consider if $\zeta$ is periodic or not.

We will consider that the maps $T_i$, for all $i\in\N_0$, are defined in $S^1$ by using the usual identification $0\sim1.$ Observe that the only point of discontinuity of such maps is $0\sim1$. Moreover, $\lim_{x\to 0^+}T_i(x)=0$ and $\lim_{x\to1^-}T_i(x)=\beta_i-\lfloor \beta_i\rfloor.$

\subsubsection{The orbit of $\zeta$ by the unperturbed $T_\beta$ map does not hit 1} We mean that for all $j\in\N_0$ we have $T^j(\zeta)\neq 1$.

\paragraph{The orbit of $\zeta$ is not periodic.} \label{subpar:aperiodic} In this case, for all $j\in\N$,  we have that $T^j(\zeta)\neq \zeta$, we  take $q=0$ and in particular $D_{n,i}^{(q)}=U_n$, for all $i\in\N_0$.
Let $J\in\N$.

We will check that for $n$ sufficiently large $\tilde R_n^{(q)}>J$. Since $\zeta$ is not periodic, there exists some $\epsilon>0$ such that $\min_{j=1,\ldots J}\dist(T^j(\zeta), \zeta)>\epsilon$. Let $N_1\in\N$ be sufficiently large so that for all $i\geq N_1$, we have
$$
\min_{j=1,\ldots J}\dist(T_{i+j}\circ\ldots\circ T_i(\zeta), T^j(\zeta))<\epsilon/4.
$$
Let $N_2\in\N$ be sufficiently large so that for all $i\geq N_2$ we have
$$
\mbox{diam}(T_{i+J}\circ\ldots\circ T_i(U_n))<\epsilon/4.
$$
This way for all $i\geq\max\{N_1,N_2\}$, for all $x\in U_n$ and for all $j\leq J$ we have
$$
\dist(T_{i+j}\circ\ldots\circ T_i(x), \zeta)>\epsilon/2.
$$
Hence, as long as $n^\gamma>\max\{N_1,N_2\}$ we have $\tilde R_n^{(q)}>J$.

Note that for this argument to work we only need that $\beta_n\to \beta$ and the stronger restriction imposed by \eqref{eq:beta-rate}  is not necessary.

\paragraph{The orbit of $\zeta$ is periodic.} \label{subpar:periodic} In this case, there exists $p\in\N$,  such that $T^j(\zeta)\neq \zeta$ for all $j<p$ and $T^p(\zeta)=\zeta$. We  take $q=p$.

Let \begin{equation}
\label{eq:def-eps-n}
\eps_n:=|\beta_{n^\gamma}-\beta|.\end{equation} By \eqref{eq:beta-rate} and choice of $\gamma$, we have that $\eps_n=o(n^{-1})$. Also let $\delta>0$, be such that $B_\delta(\zeta)$ is contained on a domain of injectivity of all $T_i$, with $i\geq n^\gamma$.

Let $J\in\N$ be chosen. Using a continuity argument, we can  show that there exists $C:=C(J,p)>0$ such that
$$
\dist(T_{i+j}\circ\ldots\circ T_{i+1} (\zeta), T^j(\zeta))< C\eps_n, \mbox{ for all $i=1, \ldots, J$}
$$
and moreover $U_n\cap T_{i+j}\circ\ldots\circ T_{i+1}(U_n)=\emptyset$, for all $j\leq J$ such that $j/p-\lfloor j/p\rfloor>0$.

We want to check that if $x\in A_i^{(q)}$ for some $i\geq n^\gamma$, \ie $\TF_i(x)\in D_{n,i}^{(q)}$, then $x\notin A_{i+j}^{(q)}$, for all $j=1, \ldots, J$, \ie $\TF_{i+j}(x)\notin D_{n,i+j}^{(q)}\subset U_n$, for all such $j$. By the assumptions above, we only need to check the latter for all $j=1, \ldots, J$ such that $j/p-\lfloor j/p\rfloor=0$, \ie for all $j=sp$, where $s=1, \ldots, \lfloor J/p\rfloor$.

By definition of $A_i^{(q)}$ the statement is clearly true when $s=1$. Let us consider now that $s>1$ and let $x\in A_i^{(q)}$. We may write
$$
\dist(\TF_{i+sp}(x), T_{i+sp}\circ\ldots\circ T_{i+p+1}(\zeta))>(\beta-\eps_n)^{(s-1)p}\dist(\TF_{i+p}(x),\zeta).
$$
On the other hand,
$$
\dist(T_{i+sp}\circ\ldots\circ T_{i+p+1}(\zeta), \zeta)\leq C\eps_n.
$$
Hence,
\begin{align*}
\dist(\TF_{i+sp}(x), \zeta)&\geq \dist(\TF_{i+sp}(x), T_{i+sp}\circ\ldots\circ T_{i+p+1}(\zeta))- \dist(T_{i+sp}\circ\ldots\circ T_{i+p+1}(\zeta), \zeta)\\
&\geq (\beta-\eps_n)^{(s-1)p}\dist(\TF_{i+p}(x),\zeta)-C\eps_n\\
&\geq (\beta-\eps_n)^{(s-1)p}\frac{m(U_n)}{2}-C\eps_n, \mbox{ since $x\in A_i^{(q)}\Rightarrow \TF_{i+p}(x)\notin U_n$}\\
&>\frac{m(U_n)}{2}, \mbox{ for $n$ sufficiently large, since $\eps_n=o(n^{-1})$.}
\end{align*}
This shows that $\TF_{sp+i}(x)\notin U_n$, which means that  $\TF_{sp+i}(x)\notin D_{n,i}^{(q)}$ and hence $x\notin A_{i+sp}^{(q)}$.

\subsubsection{$\zeta=0\sim1$} In this case we proceed in the same way as in \cite[Section~3.3]{AFV15}, which basically corresponds considering two versions of the same point: $\zeta^+=0$ and $\zeta^-=1$. Note that $\zeta^+$ is a fixed point for all maps considered and $\zeta^-$ may or not be periodic. So we split again into two cases.

\paragraph{1 is not periodic} This means that $T^i(1)\neq \zeta$ for all $i\in\N$. Note that $U_n$ can be divided into $U_n^+$ which corresponds to the bit having $0$ at its left border and $U_n^-$ which corresponds to the interval with $1$ as its endpoint. In this case, $q=1$ and $D_{n,i}^{(1)}$ has two connected components  one of them being $U_n^-$. Let $J\in\N$ be fixed as before. A continuity argument as the one used in Paragraph~\ref{subpar:aperiodic}, allows us to show that the points of $U_n^-$ do not return before $J$ iterates. An argument similar to the one used in Paragraph~\ref{subpar:periodic} would allow us to show also that the points of the other connected component of $D_{n,i}^{(1)}$ do not return to $U_n$ before time $J$, also.

\paragraph{1 is periodic} This means that there exists $p\in\N$ such that $T^i(1)\neq \zeta$ for all $i<p$ and $T^p(1)=\zeta$. In this case, we need to take $q=p$ and observe that $D_{n,i}^{(q)}$ has again two connected components, one to the right of 0 and the other to the left of 1, where none of the two points belongs to the set. The argument follows similarly as in the previous paragraph, except that this time both sides require mimicking the argument used in Paragraph~\ref{subpar:periodic}. Note that, the maps are orientation preserving so there is no switching as described in  \cite[Section~3.3]{AFV15}.

\subsection{Verification of condition \eqref{eq:EI}} We only need to verify \eqref{eq:EI}, when $\zeta$ has some sort of periodic behaviour. Let $\eps_n$ be defined as in \eqref{eq:def-eps-n}. Let $\delta_n$ be such that $U_n=B_{\delta_n}(\zeta)$. For simplicity, we assume that we are using the usual Riemannian metric so that we have a symmetry of the balls, which means that $|U_n|=m(U_n)=2\delta_n$.

Let us assume first that $\zeta$ is a periodic point of prime period $p$ with respect to the unperturbed map $T=T_\beta$ and the orbit of $\zeta$ does not hit $0\sim 1$. In this case, we take $q=p$, $\theta=1-\beta^{-p}$ and check \eqref{eq:EI}.

Using a continuity argument we can show that there exists $C:=C(J,p)>0$ such that
$$
\dist(T_{i+p}\circ\ldots\circ T_{i+1} (\zeta), \zeta)< C\eps_n.
$$
We define two points $\xi_u$ and $\xi_l$ of $B_{\delta_n}(\zeta)$ on the same side with respect to $\zeta$ such that $\dist(\xi_u,\zeta)=(\beta-\eps_n)^{-p}\delta_n+C\eps_n$ and $\dist(\xi_l,\zeta)=(\beta+\eps_n)^{-p}\delta_n-(\beta+\eps_n)^{-p}C\eps_n$. Recall that for all $i\geq n^\gamma$, we have that $(\beta-\eps_n)\leq \beta_i\cdot\ldots\cdot\beta_{i+p}\leq
(\beta+\eps_n)$.

Since we are composing $\beta$ transformations, then for all $i\geq n^\gamma$, we have $\dist(T_{i+p}\circ\ldots\circ T_i(\xi_u), T_{i+p}\circ\ldots\circ T_i(\zeta))\geq \delta_n+(\beta-\eps_n)^p C\eps_n$. Using the triangle inequality it follows that
$$
\dist(T_{i+p}\circ\ldots\circ T_i(\xi_u), \zeta)\geq \delta_n.
$$
Similarly, $\dist(T_{i+p}\circ\ldots\circ T_i(\xi_l), T_{i+p}\circ\ldots\circ T_i(\zeta))\leq \delta_n-C\eps_n$ and
$$
\dist(T_{i+p}\circ\ldots\circ T_i(\xi_l), \zeta)\leq \delta_n.
$$
If we assume that both $\xi_u$ and $\xi_l$ are on the right hand side with respect to $\zeta$ and $\xi_u^*$ and $\xi_l^*$ are the corresponding points on the left hand side of $\zeta$, then
$$
(\zeta-\delta_n,\xi_u^*]\cup[\xi_u, \zeta+\delta_n)\subset D_{n,i}^{(p)}\subset (\zeta-\delta_n,\xi_l^*]\cup[\xi_l, \zeta+\delta_n).
$$
Hence,
$$
\delta_n-(\beta-\eps_n)^{-p}\delta_n-C\eps_n\leq \frac12m(D_{n,i}^{(p)})\leq \delta_n- (\beta+\eps_n)^{-p}\delta_n+(\beta+\eps_n)^{-p}C\eps_n.
$$
Since $\eps_n=o(n^{-1})=o(\delta_n)$ then we easily get that
\begin{equation*}
\lim_{n\to\infty}\frac{m(D_{n,i}^{(p)})}{m(U_n)}=1-\beta^{-p}.
\end{equation*}

Now, observe that by \eqref{eq:iterated-measure}, $m(A_{n,i}^{(p)})=m(\TF_i^{-1}(D_{n,i}^{(p)}))=\mu(D_{n,i}^{(p)})+o(n^{-1})$ and $m(X_i>u_n)=\mu(U_n)+o(n^{-1})$. Hence, we have that
\begin{equation*}
\lim_{n\to\infty}\frac{m(A_{n,i}^{(p)})}{m(X_i>u_n)}=\lim_{n\to\infty}\frac{\mu(D_{n,i}^{(p)})}{\mu(U_n)}.
\end{equation*}

The density $\frac{d\mu}{dm}$, which can be found in \cite[Theorem~2]{P60}, is sufficiently regular so that, as in \cite[Section~7.3]{FFT15}, one can see that
$$
\lim_{n\to\infty}\frac{\mu(D_{n,i}^{(p)})}{\mu(U_n)}=\lim_{n\to\infty}\frac{m(D_{n,i}^{(p)})}{m(U_n)}.
$$
It follows that
$$
\lim_{n\to\infty}\frac{m(A_{n,i}^{(p)})}{m(X_i>u_n)}=1-\beta^{-p}.
$$
Since, as we have seen in \eqref{eq:Ln-estimate}, we can write that $\ell_{n,i}=\frac n{k_n}(1+o(1))$, then the previous equation can easily be used to prove  that condition \eqref{eq:EI} holds, with $\theta=1-\beta^{-p}$.

In the case $\zeta=0\sim1$, the argument follows similarly but this time we have to take into account the fact that the density is discontinuous at $0\sim1$. By  \cite{P60} we have that $$\frac{d\mu}{dm}(x)=\frac1{M(\beta)}\sum_{x<T^n (1)} \frac1{\beta^n},$$ where $M(\beta):=\int_0^1\sum_{x<T^n (1)} \frac1{\beta^n}dm$.  In this case, we have $\theta=\frac{d\mu}{dm}(0)(1-\beta^{-1})+\frac{d\mu}{dm}(1)$ if $1$ is not periodic and $\theta=\frac{d\mu}{dm}(0)(1-\beta^{-1})+\frac{d\mu}{dm}(1)(1-\beta^{-p})$ if $1$ is periodic of period $p$.

\subsection{An example with an EI equal to 1 at periodic points}
\label{subsec:counter-example}

In the previous subsections, we used \eqref{eq:beta-rate}, which imposes a fast accumulation rate of $\beta_n$ to $\beta$, to show that the EI equals the EI observed for the unperturbed dynamics. If this condition fails then the EI for the sequential dynamics does not need to be the same as the one of the original system.

Let $\beta=5/2$ and $T=T_\beta=5/2x \mbox{ mod } 1$. Let $\zeta=2/3$. Note that $T(2/3)=2/3$. Consider a sequence $\beta_j=5/2+\eps_j$, with $\eps_j=j^{-\alpha}$, where $\alpha<1$. Note that $1/n=o(\eps_n)$.

Observe that $T_j(2/3)=2/3+O(\eps_j)$. Also note that, since we are choosing, deliberately, $\eps_j>0$ for all $j$, then the orbit of $\zeta$ is being pulled to the right everytime we iterate. Moreover, by letting $j$ be sufficiently large we can keep it inside a small neighbourhood of $2/3$ at least up to a certain fixed number of iterates $J\in\N$.

For $\delta>0$, we have that $T_j(2/3-\delta)=2/3+O(\delta)+O(\eps_j)$. So if we take $\delta=\delta_n$ such that $B_{\delta_n}(\zeta)=U_n$ then $\delta_n=O(1/n)$ and we see that if $j$ and $n$ are sufficiently large then $T_j(2/3-\delta_n)>2/3+\delta_n$. Hence, by continuity, for some fixed $J\in\N$, we can show that for $j$ and $n$ sufficiently large then for all $i=1,\ldots, J$ we have $T_{j+i}\circ\ldots\circ T_{j}(U_n)\cap U_n=\emptyset$. This means that we would be able to show that $\D'_0(u_n)$ would hold with $A_{n,i}^{(q)}=U_n$ (meaning that $q=0$).

The conclusion then is that at $\zeta=2/3$, although for the unperturbed system $T$ shows an EI equal to $1-2/5=3/5$, for the sequential systems chosen as above the EI is equal to $1$.

\begin{remark}
Note that condition  \eqref{eq:beta-rate} was used to prove \eqref{F_n} so, in this case, we may need to use different $u_{n,i}$ for each $i$ but, since the invariant measure of each  $T_i$ is equivalent to Lebesgue measure, the corresponding $\delta_{n,i}$ still satisfies $\delta_{n,i}=O(1/n)$ for all $i\in\N$.
\end{remark}

\section{Random fibered dynamical systems}
We now provide a second example of non-stationary dynamical systems, this time arising from  suitable random perturbations.\\ We consider a probability space $(\Omega, \mathcal{G}, P)$ with an invertible $P$-preserving transformation $\vartheta:\Omega\rightarrow \Omega$; then we let $(\XI, \mathcal{F})$ another measurable space and $\Xi$ a measurable (with respect to the product $\mathcal{G}\times \mathcal{F}$) subset of $\XI \times \Omega$ with the fibers $\Xi^{\o}=\{\xi\in \XI: (\xi, \o)\in \Xi\}\in \mathcal{F}$. We define the (skew) map $s: \Xi\rightarrow \Xi$ by $s(\xi, \o)=(f_{\o}\xi, \vartheta\o)$, with $f_{\o}: \Xi^{\o}\rightarrow \Xi^{\vartheta\o}$ being measurable fiber maps with the composition rule
$$
f^n_{\o}: \Xi^{\o}\rightarrow \Xi^{\vartheta^n\o}, \
f^n_{\o}=f_{\vartheta^{n-1}\o}\circ\dots\circ f_{\o}.
$$
We also put
$$
f^j_{\vartheta^l\o}: \Xi^{\vartheta^{l}\o}\rightarrow \Xi^{\vartheta^{l+j}\o}; \ f^j_{\vartheta^l\o}=f_{\vartheta^{l+j-1}\o}\circ\dots\circ f_{\vartheta^l\o}.
$$
Moreover we set
$$
f^{-1}_{\vartheta^j\o}: \Xi^{\vartheta^{j+1}\o}\rightarrow \Xi^{\vartheta^{j}\o} \ \mbox{and} \
(f^k_{\o})^{-1}:=f^{-1}_{\o}\circ \dots \circ f^{-1}_{\vartheta^{k-1}\o}.
$$
This will allow us to introduce the $\sigma$-algebras
$
\mathcal{T}^{\o}_k:=(f^k_{\o})^{-1}\mathcal{T}^{\vartheta^k\o}_0
$
where $\mathcal{T}^{\vartheta^k\o}_0$ is the restriction of the $\sigma$-algebra $\mathcal{F}$ to $\Xi^{\o}\subset \XI.$\\

It is well known that a measure $\mu$ disintegrated with respect to the measure $P$ will be  $s$-invariant if the  conditional measures $\mu^{\o}$ will verify the quasi-invariant relation
\begin{equation}\label{qi}
(f_{\o})_*\mu^{\o}=\mu^{\vartheta\o}.
\end{equation}
An interesting case is whenever all the fibers $\Xi^{\o}$ coincide with the metric space $X$. In this case we can also define a marginal  measure $\mu$ on $X$ in the following way: given $A\subset X$, define
$$
\mu(A) = \tilde{\mu}(\Omega\times A) = \int_{\Omega} \mu^\omega(A)\,dP(\omega).
$$
Also in this case, the stochastic process is defined by
\begin{equation}
\label{eq:SP-random}
X_i=\varphi\circ f_\omega^i,
\end{equation}
 where $\varphi:X\to\R\cup\{+\infty\}$ is as in \eqref{eq:observable-form}. This stochastic process $X_0, X_1,\ldots$ is not necessarily stationary and, by \eqref{qi}, the distribution function of $X_i$ is given by
$$
F_i(u)=\mu^{\vartheta^i\o}(\{x\in X:\; \varphi(x)\leq u\}).
$$
In this setting, we will consider that the boundary levels $u_{n,0}, u_{n,1}, \ldots$ are such that $u_n=u_{n,0}=u_{n,1}=\ldots$, where $u_n$ is determined by the marginal measure $\mu$ so that
\begin{equation*}
\label{eq:un-fibered}
u_n=\inf\left\{u\in\R:\;\mu(\{x\in X:\; \varphi(x)\leq u\})\geq 1-\frac\tau n\right\}.
\end{equation*}

Then as a result of the theory developed in Section~\ref{sec:EVL-nonstationary}, we can write a quenched distributional limit for the partial maxima of the process $X_0, X_1,\ldots$. Namely, as a consequence of Theorem~\ref{thm:error-terms-general-SP-no-clustering} we have
\begin{corollary}
\label{cor:EVL-random}
Let $X_0, X_1, \ldots$ be a stationary stochastic process defined as above, based on the action of the fiber maps $f_\omega^n$. Assume that for $P$-a.e. $\omega\in\Omega$ conditions \eqref{Fmax} and \eqref{F_n} hold for some $\tau>0$. Assume that there exists $q\in\N_0$, defined as in \eqref{def:q}, and  \eqref{eq:EI} holds for $P$-a.e. $\omega\in\Omega$. Assume moreover that conditions $\D_q(u_{n,i})$ e $\D'_q(u_{n,i})$ are satisfied for $P$-a.e. $\omega\in\Omega$. Then
\begin{align*}
\lim_{n\to \infty}\mathbf \mu^\omega(\max\{X_0, \ldots, X_{n-1}\}\leq u_n)=e^{-\theta \tau}, \quad\text{for $P$-a.e. $\omega\in\Omega$}.
\end{align*}
\end{corollary}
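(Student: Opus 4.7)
The plan is to recognize the quenched setting as an instance of the abstract non-stationary framework of Section~\ref{sec:EVL-nonstationary} and to apply Theorem~\ref{thm:error-terms-general-SP-no-clustering} fibrewise on a full-measure set of base points $\omega$.

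First, I would fix $\omega$ in the full $P$-measure set $\Omega_0\subset\Omega$ on which the hypotheses \eqref{Fmax}, \eqref{F_n}, \eqref{eq:EI}, $\D_q(u_{n,i})$ and $\D'_q(u_{n,i})$ of the corollary are simultaneously assumed to hold. For such $\omega$, I would build the sequence space $\Y_\omega:=X^{\N_0}$, endowed with the pushforward $\p_\omega$ of $\mu^\omega$ under the map $\xi\mapsto(\xi,f_\omega(\xi),f_\omega^2(\xi),\ldots)$, and take the time evolution map $\TF$ to be the canonical shift on $\Y_\omega$. Under this identification, the coordinate projections yield exactly the process $X_i=\varphi\circ f_\omega^i$ prescribed by \eqref{eq:SP-random}, and one verifies $X_{i-1}\circ\TF=X_i$ so the abstract setting of Section~\ref{sec:EVL-nonstationary} is literally in force.

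Next, I would compute the marginals. By the quasi-invariance relation \eqref{qi}, one has $(f_\omega^i)_*\mu^\omega=\mu^{\vartheta^i\omega}$, so that the marginal distribution function of $X_i$ is
\[
F_i(u)=\mu^\omega\bigl(\varphi\circ f_\omega^i\leq u\bigr)=\mu^{\vartheta^i\omega}\bigl(\{\varphi\leq u\}\bigr),
\]
which matches the description following \eqref{qi}. Since in the hypothesis the threshold sequence is taken to be constant, namely $u_{n,i}=u_n$ for every $i$, the probability
\[
\mathbf P_n=\p_\omega(X_0\leq u_n,X_1\leq u_n,\ldots,X_{n-1}\leq u_n)
\]
appearing in Theorem~\ref{thm:error-terms-general-SP-no-clustering} coincides with $\mu^\omega(\max\{X_0,\ldots,X_{n-1}\}\leq u_n)$, which is precisely the quantity whose limit we wish to identify.

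Finally, since conditions \eqref{Fmax}, \eqref{F_n}, \eqref{eq:EI}, $\D_q(u_{n,i})$ and $\D'_q(u_{n,i})$ are assumed to hold for $\p_\omega=\mu^\omega$ by the standing assumptions on $\omega\in\Omega_0$, an application of Theorem~\ref{thm:error-terms-general-SP-no-clustering} yields
\[
\lim_{n\to\infty}\mu^\omega(\max\{X_0,\ldots,X_{n-1}\}\leq u_n)=\lim_{n\to\infty}\mathbf P_n=e^{-\theta\tau}
\]
for every $\omega\in\Omega_0$, which is the desired quenched conclusion. There is no substantial obstacle here beyond the bookkeeping needed to translate the fibered language into the abstract one; the main theorem does all the analytic work, and the corollary reduces to verifying that its hypotheses are satisfied on each fiber separately.
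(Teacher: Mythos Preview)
Your proposal is correct and matches the paper's approach exactly: the paper does not give a separate proof of Corollary~\ref{cor:EVL-random} at all, presenting it simply ``as a consequence of Theorem~\ref{thm:error-terms-general-SP-no-clustering}''. Your write-up just makes explicit the (straightforward) identification of the fibred process with the abstract non-stationary framework and then invokes the main theorem, which is precisely what the paper intends.
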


To illustrate an application of the theory developed here and in particular of Corollary~\ref{cor:EVL-random}, we look into random subshifts.

\subsection{Random subshifts}
\label{subsec:random-subshift}

We consider the random subshifts studied in \cite{RSV14} and \cite{RT15}, in the setting of Hitting Times. Here we will keep using an Extreme Values approach and the statements can be seen as a translation of the corresponding results in \cite{RSV14,RT15}, in light of the connection between HTS and EVL proved in \cite{FFT10, FFT11}.

Since the target sets, in this example, are dynamically defined cylinders, we need to produce some adjustments to the definition of the observable and to the time scale, as in \cite[Section~5]{FFT11} (where the notion of cylinder EVL was introduced), in order to properly use an EVL approach. We return to this issue below. Meanwhile, we introduce the notions using mostly the notation of \cite{RT15}.

Let $(\Omega,\vartheta,P)$ be an invertible ergodic measure preserving system, set $X=\N^{\N_0}$ and let $\sigma:X\rightarrow X$ denote the shift. Let $A=\left\{A(\omega)=(a_{ij}(\omega)):\omega\in\Omega\right\}$ be a random transition matrix, \ie for any $\omega\in\Omega$, $A(\omega)$ is in an $\N\times \N$-matrix with entries in $\{0,1\}$, with at least one non-zero entry in each row and each column and such that $\omega\rightarrow a_{ij}(\omega)$ is measurable for any $i\in \N$ and $j\in \N$. For any $\omega\in\Omega$ define
\[
X_{\omega}=\{x=(x_0,x_1,\ldots):x_i\in\N \mbox{ and } a_{x_{i}x_{i+1}}(\vartheta^i\omega)=1 \mbox{ for all } i\in\N\}
\]
and
\[
{\mathcal E}=\{(\omega,x):\omega\in\Omega, x\in X_{\omega}\}\subset\Omega\times X.
\]
We consider the random dynamical system coded by the skew-product $S:{\mathcal E}\rightarrow{\mathcal E}$ given by $S(\omega,x)=(\vartheta\omega,\sigma x)$. While we allow infinite alphabets here, we nevertheless call $S$ a {\it random subshift of finite type} (SFT). Assume that $\nu$ is an $S$-invariant probability measure with marginal $P$ on $\Omega$. Then we let $(\mu^{\omega})_{\omega}$ denote its decomposition on $X_{\omega}$, that is, $d\nu(\omega,x)=d\mu_{\omega}(x)dP(\omega)$. The measures $\mu^{\omega}$ are called the {\it sample measures}. Note $\mu^{\omega}(A)=0$ if $A \cap X_{\omega}=\emptyset$. As before, we denote by $\mu=\int \mu^{\omega}dP$ the marginal of $\nu$ on $X$.

For any $y\in X$ we denote by $C_n(y)=\{z\in X:y_i=z_i \mbox{ for all } 0\leq i\leq n-1\}$ the $n$-cylinder that contains  $y$. Let $\F^n_0$ be the $\sigma$-algebra in $X$, generated by all the $n$-cylinders.

We assume the following: there are constants $h_0>0$, $c_0>0$ and a summable function $\psi$ such that for all $m$, $n$, $\kappa\in \N$, $A\in\F^n_0$ and $B\in\F^m_0$:

\begin{enumerate}
\item the marginal measure $\mu$ satisfies
\[
\left|\mu (A\cap \sigma^{-\kappa-n}B)-\mu(A)\mu(B)\right|\leq \psi(\kappa);
\]
\label{1}

\item for $P$-almost every $\omega\in\Omega$, if $y\in X_{\omega}$ and $n\geq 1$ then $c_0^{-1}\e^{-h_0n}\leq\mu(c_n(y))$;
\label{2}

\item for $P$-almost every $\omega\in\Omega$,
\[
\left|\mu^{\omega}(A\cap \sigma^{-\kappa-n}B)-\mu^{\omega}(A)\mu^{\vartheta^{n+\kappa}\omega}(B)\right|\leq\psi(\kappa)\mu^{\omega}(A)\mu^{\vartheta^{n+\kappa}\omega}(B);
\]
\label{3}

\item the sample measure satisfies
\[
\stackrel[\omega\in\Omega]{}{\esup}\sup_{x\in X}\mu^{\omega}(C_1(x))<1.
\]
\label{4}

\end{enumerate}

The following lemma has been proved in \cite{RT15}.

\begin{lemma}
For a random SFT such that assumptions \eqref{3} and \eqref{4} hold, there exist $c_1$, $c_2>0$ and $h_1>0$ such that for any $y\in X$, $n\geq 1$ and $m\geq 1$, for almost $P$-almost every $\omega\in\Omega$,
\[
\mu^{\omega}(C_n(y))\leq c_1 \e^{-h_1 n}
\]
and
\[
\sum_{k=m}^{n}\mu^{\omega}(C_n(y)\cap\sigma^{-k}C_n(y))\leq c_2 \e^{-h_1 m}\mu^{\omega}(C_n(y)).
\]
\end{lemma}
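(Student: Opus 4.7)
Both estimates rest on combining the random decorrelation (3) with the single-coordinate bound (4). The first claim bootstraps (4) into an exponential decay of cylinders of arbitrary length through a block argument, and the second is a short corollary of the first via a further application of (3).

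\textbf{First estimate.} Let $\lambda := \esup_{\omega\in\Omega}\sup_{x\in X}\mu^\omega(C_1(x))$, which is strictly less than $1$ by (4). Since $\psi$ is summable, I would fix $\kappa_0\in\N$ large enough that $\rho := \lambda\bigl(1+\psi(\kappa_0)\bigr)<1$, and set $L := \kappa_0+1$. Using the inclusion
\[
C_n(y)\subseteq \bigcap_{j=0}^{k-1}\sigma^{-jL}C_1(\sigma^{jL}y),\qquad k := \lfloor n/L\rfloor,
\]
it suffices to control the right-hand side. Applying (3) with first factor $C_1(y)$, gap $\kappa_0$, and second factor the remaining intersection viewed on the fibre $\vartheta^L\omega$ yields
\[
\mu^\omega\!\left(\bigcap_{j=0}^{k-1}\sigma^{-jL}C_1(\sigma^{jL}y)\right)\leq (1+\psi(\kappa_0))\,\mu^\omega(C_1(y))\,\mu^{\vartheta^L\omega}\!\left(\bigcap_{j=0}^{k-2}\sigma^{-jL}C_1(\sigma^{(j+1)L}y)\right).
\]
Bounding the one-cylinder factor by $\lambda$ via (4) at each step and iterating gives a product $\leq \rho^{k-1}\lambda$, hence $\mu^\omega(C_n(y))\leq c_1\e^{-h_1 n}$ with $h_1 := -L^{-1}\log\rho$ and a suitable $c_1$, for $P$-a.e.\ $\omega$.

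\textbf{Second estimate.} The key combinatorial observation for $k\in\{m,\ldots,n\}$ is
\[
C_n(y)\cap \sigma^{-k}C_n(y)\subseteq C_n(y)\cap \sigma^{-n}C_k(\sigma^{n-k}y),
\]
obtained by keeping only those constraints of $\sigma^{-k}C_n(y)$ that concern coordinates $n,\ldots,n+k-1$; the remaining constraints are either automatic on $C_n(y)$ or force the intersection to be empty. The right-hand side has the product structure required by (3) with gap zero, so
\[
\mu^\omega\!\bigl(C_n(y)\cap\sigma^{-k}C_n(y)\bigr)\leq (1+\psi(0))\,\mu^\omega(C_n(y))\,\mu^{\vartheta^n\omega}\bigl(C_k(\sigma^{n-k}y)\bigr).
\]
Applying the first estimate to the last factor bounds it by $c_1\e^{-h_1 k}$. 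Summing the resulting geometric series over $m\leq k\leq n$ delivers the claim with $c_2 := (1+\psi(0))c_1/(1-\e^{-h_1})$.

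\textbf{Main obstacle.} The delicate point is the iteration in the first estimate: it proceeds across different fibres $\vartheta^{jL}\omega$, so the one-cylinder bound must be uniform in $\omega$ (which it is, being stated as an essential supremum in (4)), and $\psi(\kappa_0)$ must be pushed small enough compared with $1-\lambda$ (available via summability of $\psi$). Once these two ingredients are aligned, the rest is bookkeeping, and the second estimate follows from the first with essentially no additional effort.
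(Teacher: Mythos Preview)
The paper does not supply its own proof of this lemma; it simply records that it ``has been proved in \cite{RT15}''. So there is no in-paper argument to compare against. Assessed on its own, your argument is correct and is essentially the standard one: the block/iteration of (3) combined with the uniform one-cylinder bound (4) gives the exponential decay of cylinders, and the second inequality then drops out by rewriting $C_n(y)\cap\sigma^{-k}C_n(y)$ in product form and invoking the first estimate.

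One small point to tidy up. In the second estimate you apply (3) with gap $\kappa=0$, hence $\psi(0)$ appears in your constant $c_2$. The hypotheses as stated in the paper quantify over $m,n,\kappa\in\N$, and elsewhere the paper distinguishes $\N$ from $\N_0$, so (3) may only be assumed for $\kappa\ge 1$. The fix is immediate: use instead the inclusion
\[
C_n(y)\cap\sigma^{-k}C_n(y)\subseteq C_n(y)\cap\sigma^{-(n+1)}C_{k-1}(\sigma^{n-k+1}y),
\]
apply (3) with gap $\kappa=1$, and bound the second factor by $c_1\e^{-h_1(k-1)}$ via the first estimate; summing the geometric series gives the same conclusion with $c_2=(1+\psi(1))c_1\e^{h_1}/(1-\e^{-h_1})$. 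With this adjustment your proof is complete.
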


Since the target sets are cylinders, in order to state the result using an EVL approach, as mentioned earlier, we need to make some adjustments to the definition of the observable function and to the time scale. Hence, proceeding as in \cite[Section~5]{FFT11}, the stochastic process is defined by $X_i=\varphi\circ\sigma^i$, where $\varphi:X\to\R\cup\{+\infty\}$ instead of being given by \eqref{eq:observable-form} is given by
$$
\varphi(x)=g(\mu(C_{n(x)}(\zeta)),
$$
where $n(x):=\max\{j\in\N:\; x\in C_j(\zeta)\}$ and $g$ is as in Section~\ref{subset:observable-choice}. As in \cite[(5.5)]{FFT11} we let the sequence $(u_n)_{n\in\N}$ be such that $\{x\in X:\,\varphi(x)>u_n\}=C_n(\zeta)$. Moreover, for the time scale we use the sequence $(w_n)_{n\in\N}$ given by \cite[(5.6)]{FFT11}:
$$
w_n=[\tau \mu(\{x\in X:\,\varphi(x)>u_n\})],
$$
for some $\tau\geq 0$.

Now, we can apply Corollary~\ref{cor:EVL-random} to obtain the following result, which is a translation to the EVL setting of \cite[Theorem~2.2]{RT15}.
\begin{theorem}
\label{thm:random-subshift}
Assume \eqref{1}-\eqref{4} hold and there exists a constant $q>2\frac{h_0}{h_1}$ such that $\psi$ satisfies $\psi(\kappa)\kappa^q\rightarrow$ as $\kappa\rightarrow+\infty$. Let $\zeta\in X$. Then for $P$-almost every $\omega$, either

(a) $\zeta$ is a periodic point of period $p$ and if the limit $\theta:=\lim_{n\rightarrow\infty} \frac{\mu(C_n(\zeta)\setminus C_{n+p}(\zeta))}{\mu(C_n(\zeta))}$ exists, then for all $\tau\geq 0$ we have
\[
\lim_{n\rightarrow\infty} \mu^{\omega}\left(M_{w_n}\leq u_n\right)=\e^{-\theta \tau};
\]
or

(b) for all $\tau\geq 0$ we have
\[
\lim_{n\rightarrow\infty} \mu^{\omega}\left(M_{w_n}\leq u_n\right)=\e^{-\tau}.
\]
\end{theorem}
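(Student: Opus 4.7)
The plan is to verify, for $P$-a.e.\ $\omega$, the hypotheses of Corollary~\ref{cor:EVL-random} for the process $X_i=\varphi\circ\sigma^i$, with the understanding that the time horizon $n$ in the corollary must be replaced by the cylinder time scale $w_n$, as in \cite[Section~5]{FFT11}. The skew-product quasi-invariance \eqref{qi} translates into $\mu^\omega(\sigma^{-i}A)=\mu^{\vartheta^i\omega}(A)$, so exceedance events are $\sigma^{-i}C_n(\zeta)$ and $F_i$ is the push-forward of $\mu^{\vartheta^i\omega}$ by $\varphi$. The exponential upper bound for cylinder measures given by the quoted lemma yields \eqref{Fmax} at once, while \eqref{F_n} reduces to
\[
\sum_{i=0}^{w_n-1}\mu^{\vartheta^i\omega}(C_n(\zeta))\xrightarrow[n\to\infty]{}\tau,\qquad\text{for $P$-a.e.\ $\omega$,}
\]
a Birkhoff-type averaging statement around the deterministic target $w_n\,\mu(C_n(\zeta))\to\tau$; it is produced by combining the definition of $w_n$ with a Borel--Cantelli argument fed by the fiberwise mixing assumption \eqref{3}, exactly as in \cite{RSV14,RT15}.

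Next, I pin down the integer $q$ from \eqref{def:q}. For aperiodic $\zeta$ (case (b)), the cylinders $C_n(\zeta)$ and $\sigma^{-r}C_n(\zeta)$ are eventually disjoint for every fixed $r\geq 1$, so $q=0$ and $A_{n,i}^{(0)}=\sigma^{-i}C_n(\zeta)$. For $\zeta$ periodic of prime period $p$ (case (a)), one has $C_n(\zeta)\cap\sigma^{-r}C_n(\zeta)=\emptyset$ for $0<r<p$ as soon as $n>p$, while $C_n(\zeta)\cap\sigma^{-p}C_n(\zeta)=C_{n+p}(\zeta)\neq\emptyset$, so $q=p$ and $A_{n,i}^{(p)}=\sigma^{-i}\bigl(C_n(\zeta)\setminus C_{n+p}(\zeta)\bigr)$. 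With this $q$, condition \eqref{eq:EI} (adapted to the time scale $w_n$) becomes the requirement that the ratio $\mu^{\vartheta^j\omega}(C_n(\zeta)\setminus C_{n+p}(\zeta))/\mu^{\vartheta^j\omega}(C_n(\zeta))$ tends to $\theta$ uniformly in $j\in\{0,\ldots,w_n-1\}$, where $\theta$ is the limit assumed to exist with respect to $\mu$ in part (a) of the statement, and $\theta=1$ in case (b) where the numerator simply equals the denominator. The upgrade of this convergence from $\mu$ to $\mu^{\vartheta^j\omega}$ is again a consequence of \eqref{3} together with the Borel--Cantelli control over $\omega$.

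For the mixing condition $\D_q(u_{n,i})$, the fiber decorrelation \eqref{3}, applied to the cylinder-generated algebras on which $\A_{n,i}$ and $\mathscr W_{i+t,\ell}(\mathbb A_n^{(q)})$ live, yields a multiplicative error of size $\psi(t)$; any sequence $t_n^*\to\infty$ slow enough that $t_n^*\bar F_{n,\max}\to0$ and $w_n\psi(t_n^*)\to0$ works, and the polynomial rate $\psi(\kappa)\kappa^q\to0$ leaves ample room (for example $t_n^*=\lfloor(\log w_n)^2\rfloor$). For $\D'_q(u_{n,i})$, the close recurrences are removed by the very definition of $A_n^{(q)}$, so what remains is controlled by the geometric bound
\[
\sum_{k=m}^{w_n}\mu^\omega\bigl(C_n(\zeta)\cap\sigma^{-k}C_n(\zeta)\bigr)\leq c_2\e^{-h_1 m}\mu^\omega(C_n(\zeta))
\]
of the quoted lemma for the short-range part and by \eqref{3} for the long-range part; both vanish once multiplied by $w_n\approx\tau/\mu(C_n(\zeta))$. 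The main obstacle, as always in quenched results of this flavour, is passing from the integrated ($P$-averaged) mixing estimate \eqref{1} to its pointwise counterpart on fibers: this is exactly what the hypothesis $\psi(\kappa)\kappa^q\to0$ with $q>2h_0/h_1$ is calibrated for, driving a Borel--Cantelli argument over $\Omega$ in the style of \cite{RT15}. Once all of these ingredients are in hand, Corollary~\ref{cor:EVL-random} delivers both limits claimed in the statement.
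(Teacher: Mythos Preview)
Your proposal is correct and follows essentially the same route as the paper: both derive the theorem from Corollary~\ref{cor:EVL-random} by verifying \eqref{Fmax}, \eqref{F_n}, $\D_q(u_{n,i})$, $\D'_q(u_{n,i})$ and \eqref{eq:EI} for $P$-a.e.\ $\omega$, with the paper simply citing \cite[Lemmas~4.5, 4.8, 4.9 and Section~5]{RT15} for these checks while you sketch their content. One small imprecision: because the bound in assumption~\eqref{3} is multiplicative, the sum $\sum_i\gamma_i$ is of order $\psi(t_n^*-n-q)\cdot\tau$ rather than $w_n\psi(t_n^*)$, so the condition you actually need is weaker than stated and your choice $t_n^*=\lfloor(\log w_n)^2\rfloor$ indeed works.
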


In order to use Corollary~\ref{cor:EVL-random} to prove Theorem~\ref{thm:random-subshift}, one needs to check that conditions \eqref{F_n}, $\D_q(u_{n,i})$, $\D_q'(u_{n,i})$ and \eqref{eq:EI}  hold for $P$-a.e. $\omega\in\Omega$.

Note that because of the adjustments required to the cylinder setting, for condition \eqref{F_n}, one needs to check that for $P$-a.e. $\omega\in\Omega$ we have
$$
\lim_{n\to\infty}\sum_{i=0}^{w_n} \mu^{\vartheta^i(\omega)}(C_n(\zeta))=\tau,
$$
which follows immediately from \cite[Lemma~4.5]{RT15}. In the same way, conditions $\D_q(u_{n,i})$, $\D_q'(u_{n,i})$ follow from \cite[Lemma~4.8]{RT15} and \cite[Lemma~4.9]{RT15} respectively and condition  \eqref{eq:EI} from the discussion in  \cite[Section~5]{RT15}.

\section{Concluding remarks}


The sequential systems considered in this paper were built on uniformly expanding maps, for which the
transfer operators admits a spectral gap and the correlations decay exponentially. In a
different direction, a class of sequential systems given by composition of non-uniformly
expanding maps of Pomeau-Manneville type was studied in \cite{AHNTV15}, by perturbing the
slope at the indifferent fixed point $0$. Polynomial decay of correlations was proved for particular
classes of centred observables, which could also be interpreted as the decay of the
iterates of the transfer operator on functions of zero (Lebesgue) average, and this fact
is better known as \emph{loss of memory}. In the successor paper \cite{NTV15}, a (non-stationary)
central limit theorem was shown for sums of centred observables and with respect to the
Lebesgue measure. In the forthcoming paper \cite{FFV2016} we will continue  the statistical analysis of these indifferent transformations
by proving the existence of extreme value distributions under suitable normalisation
for the threshold of the exceedances.

\section*{Acknowledgements}
ACMF was partially supported by FCT (Portugal) grant SFRH/BPD/66174/2009. JMF was partially supported by FCT grant SFRH/BPD/66040/2009. All these grants are financially supported by the program POPH/FSE.  ACMF and JMF were partially supported by FCT project FAPESP/19805/2014 and by CMUP (UID/MAT/00144/2013), which is funded by FCT with national (MEC) and European structural funds through the programs FEDER, under the partnership agreement PT2020. SV was supported by the ANR- Project {\em Perturbations} and by the project {\em Atracci\'on de Capital Humano Avanzado del Extranjero} MEC 80130047, CONICYT, at the CIMFAV, University of Valparaiso. All authors were partially  supported by FCT projects PTDC/MAT/120346/2010 and PTDC/ MAT-CAL/3884/2014, which are funded by national and European structural funds through the programs  FEDER and COMPETE. SV is grateful to  N. Haydn, M. Nicol and A. T\"or\"ok for several and fruitful discussions on sequential systems, especially in the framework of indifferent maps. JMF is grateful to M. Todd for fruitful discussions and careful reading of a preliminary version of this paper.  All authors acknowledge the Isaac Newton Institute for Mathematical Sciences, where this work was initiated during the program Mathematics for the Fluid Earth.

\bibliographystyle{abbrv}

\bibliography{Nonstationary.bib}

\end{document}